\date{}
 \newcommand{\co}{{\mathcal O}}  
 \newcommand{\oco}{{\overline{\mathcal O}}} 
\newcommand{\oph}{{\overline{\Phi}}}
\newcommand{\ca}{{\mathcal A}}    \newcommand{\cb}{{\mathcal B}}   
\newcommand{\cC}{{\mathcal C}}   
\newcommand{\cd}{{\mathcal D}}
\newcommand{\ce}{{\mathcal E}}    \newcommand{\cf}{{\cal F}}   \newcommand{\cg}{{\mathcal G}}
\newcommand{\calr}{{\mathcal R}}     \newcommand{\ct}{{\mathcal T}}
\newcommand{\cu}{{\mathcal U}}       \newcommand{\cv}{{\mathcal V}}
\newcommand{\Q}{\mathbb{Q}}
\newcommand{\Pp}{\mathbb{P}} 
\newcommand{\Tt}{\mathbb{T}} 
\newcommand{\Ee}{\mathbb{E}} 
\newcommand{\Ss}{\mathbb{S}}
\newcommand{\al}{{\alpha}}
\newcommand*{\Fscr}{\mathcal F}
\newcommand*{\cE}{\mathcal{E}}
\renewcommand*{\d}{\, \mathrm{d}}
\newcommand{\ds}{\displaystyle}
\numberwithin{equation}{section}
\providecommand\mathbb{\bf}
\newcommand\R{{\mathbb R}}
\providecommand\text[1]{\textrm{#1}}
\newtheorem{thm}{Theorem}[section]
\newtheorem{lem}[thm]{Lemma}
\newtheorem{prop}[thm]{Proposition}
\newtheorem{cor}[thm]{Corollary}
\newtheorem{rem}[thm]{Remark}  
\numberwithin{equation}{section}
\begin{document}

\noindent
{\bf\large Continuous flows driving  Markov processes and multiplicative $L^p$-semigroups}\\[1mm]

\noindent
Lucian Beznea\footnote{Simion Stoilow Institute of Mathematics  of the Romanian Academy, Research unit No. 2,  
P.O. Box \mbox{1-764,} RO-014700 Bucharest, Romania, and  University POLITEHNICA Bucharest, CAMPUS Institute.
{E-mail}:
{lucian.beznea@imar.ro}}, 
Mounir Bezzarga\footnote{Institut Pr\'eparatoire aux Etudes d'Ing\'enieurs de Tunis, 2 Rue Jawaher Lel Nehru, 1008\ Montfleury-Tunis, Tunisia. 
{E-mail}: 
{mounir.bezzarga@ipeim.rnu.tn} },
and
Iulian C\^{i}mpean\footnote{ University of Bucharest, Faculty of Mathematics and Computer Science, and Simion Stoilow Institute of Mathematics of the Romanian Academy, Research unit No. 2, P.O. Box \mbox{1-764,} RO-014700 Bucharest Romania. E-mail: iulian.cimpean@unibuc.ro}

\vspace{10mm}

\noindent{\small{\bf  Abstract.} 
We develop a method of driving a Markov processes through a continuous flow.
In particular, at the level of  the transition functions we investigate an approach of adding a first order operator to the generator of a Markov process, when the two generators commute.
A relevant example is  a measure-valued   superprocess having a continuous flow as spatial motion and a branching mechanism which does not depend on the spatial variable.
We  prove that any flow is actually continuous in a convenient topology 
and we show that a Markovian multiplicative semigroup on an $L^p$ space is generated by a continuous flow, completing the answer to the question whether it is enough to have a measurable structure, like a $C_0$-semigroup of Markovian contractions on an $L^p$-space with no fixed topology, in order to esnsure the existence of a right Markov process associated to the given semigroup.
We extend from bounded to unbounded functions the  weak generator (in the sense of Dynkin) and the corresponding martingale problem.}

\vspace{4mm}

\noindent
{\it Mathematics Subject Classification (2020)}:  
60J35,  37C10, 60J45,  47D07, 60J40, 60J68


\vspace{1mm}

\noindent
{\it Key words and phrases.}  Continuous flow, semi-dynamical system, weak generator, multiplicative semigroup,  $L^p$-semigroup, martingale problem.

\section{Introduction} 

The solution of a first order differential equation in an Euclidean domain $E$, a typical example of continuous flow on $E$, 
may be regarded as a deterministic Markov process and its generator $D$ acts on functions on $E$ as a derivation, i.e., $D(u^2)=2uDu$.
It turns out this property  remains valid for the generator of a right continuous flow on a general state space $E$,  hence the approach herein considered provides 
a substitute for a gradient type operator in a general setting, possible infinite dimensional. 

The purpose of this work is twofold.
First, we study Markov processes which are driven by continuous flows, namely processes $X^\Phi$ admitting the structure
\begin{equation} \label{eq1.1}
X^\Phi_t= \Phi_t (X_t), t\geqslant 0,
\end{equation}
where $\Phi$  is a continuous flow  and $X$ is a Markov process on $E$.
Second, we investigate multiplicative semigroups in an $L^p$-context and the associated continuous flows, completing the answer given in  \cite{BeBoRo06} to the question whether it is enough to
have a measurable structure, like a $C_0$-semigroup of  Markovian  contractions on an $L^p$-space, with no fixed topology,  in order to find a Markov process behind the given semigroup;  see also \cite{BeBoRo06a}, and \cite{BeCiRo18}.
We show that the additional property of being multiplicative on $L^p$ (or equivalently, the $L^p$-generator to be a derivation) is enough for the existence of a continuous flow having the given $L^p$-semigroup as its transition function.

If $L$ (resp. $L^\Phi$) is the generator of $X$ (resp. $X^\Phi$) and  $(\ref{eq1.1})$ holds, then $L^\Phi = L+ D$, so, we regard 
$L^\Phi$  as a modification of $L$ with a drift type operator $D$.  
In this way, the weak generator (in the sense of E.B. Dynkin) of a Markov process 
but also of a right continuous flow are main tools in our approach. 
An example for which our method apply is obtained by taking $L$ to be the fractional power (or more general, a Bochner subordination) of $D$.
We present in particular a method of extending the domain of the weak generator from bounded to unbounded functions, 
enlarging the class  of functions for which the associated martingale problem has a solution; for other related extensions of the weak generator see
\cite{HiYo12} and \cite{Ku20}.

The motivation for the first aim is  the application to the  measure-valued  superprocesses, cf. e.g.  \cite{Li11}.
Recall that the  state space of a superprocess $\widehat{X}$  is the set 
 $M(E)$ of all positive finite measures on $E$ and the evolution is given by a branching mechanism 
and  a spatial motion which describe the movement of the particles between the branching moments.
If the spatial motion is  a right continuous flow and the branching mechanism 
does not depend on the spatial variable then the representation $(\ref{eq1.1})$ holds on $M(E)$ 
by means of a second superprocess $\widehat{X^0}$  and of the flow on measures induced by $\Phi$,
$$
\widehat{X}_t 
= \Phi_t( \widehat{X_t^0}), t\geqslant 0.
$$
Here, the superprocess $\widehat{X^0}$ is such that it has the same branching mechanism as $\widehat{X}$, however,  it has no a spatial motion.

The structure and main results of the paper are as follows.

In Section \ref{sec2} we present the basic facts on the right continuous flows and  flows on a space with no fixed topology, called
semi-dynamical systems.
Theorem \ref{thm2.2}  shows that actually such a flow is continuous in a convenient topology, extending a result from
\cite{Sh88}. As a consequence, the induced capacity is tight.

The results on the extended weak generator of a Markov process are exposed in Section \ref{sec3}, including the associated martingale problem.
In  Subsection \ref{subsec3.1} we study  the extended weak generator of a semi-dynamical system.
Finally, we show in  
Subsection \ref{subsec3.2}, Proposition \ref{prop3.8},  
that a continuous flow may be stopped at the first entry time in the complement of an open set, a procedure  already  used in \cite{BeIoLu-St21} and \cite{BeLu-StTeod24}.
Several technical proofs are included in the Appendix.

The theory of continuous flows driving Markov process is investigated in Section \ref{sec4}. 
The main result (Theorem \ref{thm4.1}) about the representation $(\ref{eq1.1})$ and the drift modification of the weak generator of Markov process,  
is followed by   the example  on  the Bochner subordination of a right continuous flow,  
stated in Corollary \ref{cor4.4} from Subsection \ref{subesec4.1}.
The main application in this framework is given in 
 Subsection \ref{subesec4.2}.

Theorem \ref{thm5.3}  from \Cref{sec5} is the central result that relates multiplicative $L^p$-semigroups with continuous flows.

\section{Semi-dynamical systems and right continuous flows}\label{sec2}  

\paragraph{Transition functions, resolvent of kernels, and excessive functions.}
Let $(E,\mathcal B(E))$ be a Lusin measurable space, i.e., it is
measurable isomorphic to a Borel subset of a metrizable compact
space endowed with the Borel $\sigma$-algebra.

For a $\sigma$-algebra $\cg$ we denote by 
$ [\cg]$  (resp. $p\cg$) the vector space of all real-valued (resp. the set of all positive, numerical) $\cg$-measurable functions on $E$.
 Also,  for a set of real-valued functions $\cC$ we denote by $\sigma(\cC)$ the $\sigma$-algebra generated by $\cC$,
 by $[\cC]$ the vector space spanned by $\cC$, and by $p\cC$ (resp. $b\cC$) the set of all positive (resp. bounded) functions from $\cC$.

We consider a sub-Markovian resolvent of kernels  
$\mathcal U=(U_\alpha)_{\alpha>0}$ on $(E,\mathcal B(E))$.
A nonnegative, numerical, $\mathcal{B}(E)$-measurable function defined on $E$ is called {\it $\cu$-excessive} provided that
\begin{equation}\label{eq:ex}
    \quad{\alpha}U_{\alpha}u \leqslant u \quad \mbox{for all } {\alpha} > 0, \quad \mbox{ and } \quad \lim_{\alpha\to\infty}{\alpha}U_{\alpha}u(x)=u(x), 
 x\in E.
\end{equation}
We  denote by $\mathcal E(\mathcal U)$ the set of all real-valued
$\cu$-excessive functions. 
If $\beta>0$ we denote by $\mathcal
U_\beta$ the sub-Markovian resolvent of kernels
$(U_{\beta+\alpha})_{\alpha>0}$. 
A $\mathcal{U}_\beta$-excessive function is also called {\it $\beta$-excessive.}
If $w$ is a $\mathcal{U}_{\beta}$-supermedian function 
({\it i.e.,} ${\alpha}U_{{\beta}+{\alpha}}w\leqslant w$ for all ${\alpha}>0$), then its {\it $\mathcal{U}_{\beta}$-excessive regularisation} 
$\widehat{w}$ is given by $\widehat{w}(x):=\sup_{\alpha} {\alpha}U_{{\beta}+{\alpha}}w(x)$,  $x{\in E}$.

\vspace{1mm}

Let $\Tt =(T_t)_{t\geqslant 0}$ be a sub-Markovian {\it transition function}  on 
$(E, \cb(E))$, that is
\begin{enumerate}
    \item [-] $T_t$
is a sub-Markovian kernel on $E$, $T_0=Id$, $T_t\circ T_s=T_{t+s}$ for all $t, s >0$;
\item[-] for every $f\in bp\cb(E)$ the mapping
$(x, t)\rightarrow T_t f(x)$ is $\cb(E) \otimes \cb(\R_+)-$measurable.
\end{enumerate} 
Let further  $\cu=(U_\alpha)_{\alpha > 0}$ be the resolvent of sub-Markovian kernels induced by $\Tt =(T_t)_{t\geqslant 0}$,
\begin{equation*}
    U_\alpha := \int_0^\infty e^{-\alpha t} T_t \d t, \mbox{  for all } \alpha >0,
\end{equation*}
and let $U$ be the {\it potential kernel} of  $\Tt $  (and of  $\cu$),
$U := \int_0^\infty  T_t \d t$.

\noindent{Recall} that condition \eqref{eq:ex} is equivalent with
\begin{equation*}
    T_t u\leqslant u  \mbox{ for all } t>0 \mbox{ and } \lim_{t \searrow 0} T_t u(x)= u(x) \mbox{ for all } x\in E.
\end{equation*}
If $\beta>0$ then clearly, $\mathcal{U}_{\beta}$ is the resolvent of kernels induced by the sub-Markovian transition function 
$\Tt_\beta =(e^{-\beta t} T_t)_{t\geqslant 0}$.
Notice that the potential kernel of $\Tt_\beta$ is the bounded kernel $U_\beta$, in contrast with the potential kernel $U$ of
$\Tt$ which might be an unbounded kernel.

Assume now that $E$ is a Lusin topological space ({i.e.,} $E$ is homeomorphic to a Borel subset of a metrizable compact space) 
and let $\mathcal{B}(E)$ its  Borel ${\sigma}$-algebra.  
Let $X=(\Omega,\cf,\cf_t, X_t, \Pp^x,\zeta)$ be a right Markov process on $E$ having $(P_{t})_{t\geqslant 0}$ 
as transition function, hence
\begin{equation*}
    P_{t}f(x)=\mathbb{E}^{x}(f(X_{t}),t<\zeta),\, {t\geqslant 0},\, f\in p\mathcal{B}(E),
\end{equation*}
and let $\cu =(U_\alpha)_{\alpha>0}$ be the resolvent on $(E,\cb(E))$ associated with $(P_{t})_{t\geqslant 0}$.
The {\it fine topology}  is the coarsest topology on $E$  making continuous all $\beta$-excessive
functions for some (and equivalently for all)  $\beta>0$.
Recall that in this context, a function $f$ from $p\cb(E)$  is finely continuous if and only if 
$t\rightarrow f(X_t)$ is a.s. right continuous on $[0,\zeta)$.
Using this characterization and the fact that $X$ is has right continuous paths, any  continuous function on $E$ is also finely continuous.

\vspace{1mm}

\paragraph{Semi-dynamical systems.} Let $(E, \cb)$ be a Lusin measurable space
and let $\Phi=(\Phi_t)_{t\geqslant 0}$ be a family of mappings $\Phi_t:E\rightarrow E, t\geqslant 0$.
Then $\Phi$ is called  {\it semi-dynamical system} on $E$ provided that the  following conditions are satisfied:
\begin{enumerate}
    \item[(sd1)] $\Phi_{t+s}(x)= \Phi_t(\Phi_s(x))$  for all $s,t > 0$ and $x\in E$;
    \item[(sd2)] $\Phi_{0}(x)=x$    for all  $x\in E$;
    \item[(sd3)] For each $t>0$ the function $ E \ni x \longmapsto \Phi_t(x)$ is $\cb(E)/ \cb(E)$-measurable;
    \item[(sd4)] There exists a countable set $\cC_o\subset bp\cb$ such that $\cC_o$ separates the points of $E$ and 
$\lim_{t\searrow 0} f(\Phi_t(x))= f(x)$ for all $x\in E$ and $f\in \cC_o$.
\end{enumerate}

In the sequel, if $f\in [\cb]$ and $N$ is a kernel on $(E, \cb(E))$, then by $Nf\in [\cb(E)]$ we mean that
 $N|f|<\infty$, hence $N(f^+)$ and $N(f^-)$  are real-valued functions and $Nf=N(f^+)- N(f^-).$

\begin{rem}
Note that if $\Phi=(\Phi_t)_{t\geqslant 0}$ is a semi-dynamical system  on $E$ then the function $ E\times [0,\infty) \ni (x,t) \longmapsto \Phi_t(x)$ is $\cb(E) \otimes\cb([0,\infty))/\cb(E) $-measurable. 
This follows by a monotone class argument, observing first that from $(sd4)$ it follows  that for every $f\in \cC_o$ the real-valued function
$t\longmapsto  f(\Phi_t(x)) $ is right continuous on $[0, \infty)$.
\end{rem}

For each $t\geqslant 0$ define the Markovian kernel on $E$ as
$$
S_t f :=  f \circ \Phi_t \, \mbox{ for all } f\in p\cb(E). 
$$
Then the  family $\Ss=(S_t)_{t\geqslant 0}$ is a Markovian transition function on $E$, called the {\it transition function of } the semi-dynamical system 
$\Phi=(\Phi_t)_{t\geqslant 0}$. 
 
\begin{rem} \label{rem2.1} 
\begin{enumerate}
    \item[(i)] The transition function $\Ss=(S_t)_{t\geqslant 0}$ of  a semi-dynamical system   $\Phi=(\Phi_t)_{t\geqslant 0}$ on $E$ is {\rm multiplicative}, that is, $S_t(f g)= (S_t f)(S_t g)$ for all $t\geqslant 0$ and $f,g \in bp\cb(E)$.
    \item[(ii)] It is known that the converse of assertion (i) holds: Let $\Ss=(S_t)_{t\geqslant 0}$  be a Markovian transition function on $E$ which is multiplicative and
    \begin{equation} \label{eq2.1}
    \mbox{there exists a countable set $\cC_o\subset bp\cb$ such that $\cC_o$ separates the points of $E$,}
    \end{equation}
    and $\lim_{t\searrow 0} S_t(x) = f(x)$ for all $x\in E$ and $f\in \cC_o$.
    Then there exists a semi-dynamical system on $E$, having the transition function $\Ss$.\\
    Indeed, for $x\in E$ and $t\geqslant 0$ let $S_{t,x}$ be the probability on $E$  induced by the measure $f\longmapsto S_t f(x)$. 
    If $A\in \cb(E)$ then, $S_t$ being multiplicative, we have $S_{t,x}  (1_A)= (S_{t,x}  (1_A))^2$, so, either $S_{t,x}  (1_A)=0$ or $S_{t,x}  (1_A)=1$. 
    It follows that there exists $\Phi_t(x)\in E$ such that $S_{t,x}  = \delta_{\Phi_t(x)}$. Since $S_t f\in bp\cb(E)$ for all $f\in  bp\cb(E)$ it follows that $(sd3)$ holds. 
    The semigroup property of $(S_t)_{t\geqslant 0}$  implies that $(sd1)$ is verified and from $(\ref{eq2.1})$ it follows that $(sd4)$ also holds. 
    Finally, because $S_0=Id$ we get $(sd2)$.
    \item[(iii)] Let $\ca$ be a collection of bounded real-valued functions defined on $E$ which is multiplicative (i.e.,  if $ f, g\in \ca$ then $fg \in \ca$)  and generates $\cb(E)$. 
    Let  further $\Ss=(S_t)_{t\geqslant 0}$ be a sub-Markovian transition function on $E$ such that $S_t(f g)= (S_t f)(S_t g)$ for all $f,g \in \ca$. 
    Then $\Ss=(S_t)_{t\geqslant 0}$ is multiplicative. Indeed, if we fix  $x\in E$ and $g \in \ca$ then,  writing $g=g^+ - g^-,$ the functionals $f\longmapsto S_t (fg)(x)$ and $f\longmapsto S_t (f)(x) S_t(g)(x)$ are differences of two positive finite measures which coincide on $\ca$. By a monotone class argument we get $S_t(f g)= (S_t f)(S_t g)$ for all $f\in bp\cb(E)$. 
    Fixing now $f\in bp\cb(E)$ and arguing as before, we conclude that the last equality holds for all $f, g \in bp\cb(E)$. 
    \item[(iv)] We have
    \begin{equation}\label{eq2.2}
     \mbox{ if } \Ss= (S_t)_{t\geqslant 0} \mbox{ is multiplicative and } v \mbox{  is }\beta\mbox{-excessive then } v^2 \mbox{ is }  {2\beta}\mbox{-excessive,} 
    \end{equation}
    where $\beta \geqslant 0$.
    Indeed, since $\Ss =(S_t)_{t\geqslant 0}$ is multiplicative
    we have $e^{-2\beta t} S_t (v^2)= (e^{-\beta t} S_t v)^2\leqslant v^2$, where the inequality holds because $v$ is $\beta$-excessive.
    Then clearly 
    $\lim_{t\searrow 0}  e^{-2\beta t} S_t (v^2)=  \lim_{t\searrow 0}  (S_t v)^2$ $= v^2$, where the last equality follows from
    $ \lim_{t\searrow 0}  S_t v= v.$\\
    \end{enumerate}
\end{rem}

If $E$ is a Lusin topological space  and $\mathcal{B}=\cb(E)$ is the Borel $\sigma$-algebra, then
a family $\Phi=(\Phi_t)_{t\geqslant 0}$ of mappings on $E$ is called {\it right continuous flow} (cf.  \cite{Sh88}, page 41) provided that
$(sd1) - (sd3)$ hold and 
in  addition:

\medskip
$(sd4')$ For each $x\in E$ the function $t\longmapsto \Phi_t(x)$ is right continuous on $[0, \infty)$.

\medskip

Clearly, any right continuous flow is a semi-dynamical system, because $(sd4')$ implies $(sd4)$, by taking 
$\cC_o$ a countable subset of  $bpC(E)$ which separates the points of  $E$. 
If the function $t\longmapsto \Phi_t(x)$ is continuous on $[0, \infty)$ for all $x\in E$ then 
$\Phi$ is called {\it continuous flow}. 

\begin{rem}
One may regard a right continuous flow $\Phi=(\Phi_t)_{t\geqslant 0}$ as a {\it deterministic right Markov process} 
$X=(\Omega,\cf,\cf_t, X_t, \Pp^x)$ in the following way: 
$\Omega:=E$, $\cf=\cf_t:=\cb(E)$,  $X_t(x):=\Phi_t(x)$  for all $x\in \Omega$ and $t\geqslant 0$, and $\Pp^x:=\delta_x$.
\end{rem}

Let $\cv=(V_\alpha)_{\alpha>0}$ be the resolvent of kernels associated with $\Ss$, $V_\alpha f =\int_0^\infty e^{-\alpha t} f(\Phi_t) \d t.$
We fix $\beta >0$, a strictly positive  function $f_o\in bp\cb(E)$,  and put $u_o := V_\beta f_o$.
We define now the {\it capacity} induced by $\phi$, by regarding $\phi$ as a (deterministic) right process. 
Let $\lambda$ be a finite measure on $E$ and consider the functional
$M\longmapsto c^\beta_\lambda (M),$ $M\subset E$, defined as
$$
c^\beta_\lambda (M) : = \inf \left\{ \int_E  e^{- \beta D_G } u_o(\Phi_{D_G })  \d \lambda  :  G \mbox{ open, }   M\subset G \right\}, 
$$
where $D_G$ is the {\it first  entry time}  of $G$,  $D_G(x):= \inf\{ t\geqslant 0: \Phi_t(x) \in G \},$  $x\in E.$
For measurability properties of the first entry and hitting times  in a set, for semi-dynamical systems with general state space see \cite{BezBu05}.
It turns out that $c^\beta_\lambda$ is Choquet capacity on $E$; see e.g. \cite{BeBo04} and also \cite{BeRo11} and \cite{BeBo05}.
Recall that the capacity $c^\beta_\lambda$ is called {\it tight} provided that there exists an increasing  sequence $(K_n)_n$  of compact sets such that
$\inf_n c^\beta_\lambda (K_n)=0$.

We can state now the first main result, which shows that every semi-dynamical system becomes a continuous flow with respect to a convenient Lusin topology.

\vspace{1mm}

\begin{thm} \label{thm2.2}  
Let $\Phi=(\Phi_t)_{t\geqslant 0}$ be a semi-dynamical system on a Lusin measurable space $(E, \cb)$.
Then there exists a Luzin topology $\ct$ on $E$ such that $\cb=\cb(E)$ is the  Borel $\sigma$-algebra and $\Phi$ is a continuous flow with respect to this topology, 
such that  the map $x\longmapsto \Phi_t(x)$ is continuous on $E$ for all $t\geqslant 0$. 
For every finite measure $\lambda$ on $E$ and $\beta >0$ the capacity $c^\beta_\lambda$ is tight.
\end{thm}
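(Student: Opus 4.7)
The plan is to construct $\mathcal{T}$ as the initial topology on $E$ generated by a carefully chosen countable family $\mathcal{R}$ of bounded $\mathcal{B}$-measurable functions, in the spirit of a Ray-type compactification adapted to the present deterministic setting. I would take
\begin{equation*}
\mathcal{R} := \{ V_\alpha g \circ \Phi_r : \alpha \in \mathbb{Q}_+^*, \ g \in \mathcal{C}_o, \ r \in \mathbb{Q}_+\},
\end{equation*}
a countable collection of uniformly bounded $\mathcal{B}$-measurable functions. That $\mathcal{R}$ separates the points of $E$ follows from (sd4): given $x \neq y$, pick $g \in \mathcal{C}_o$ with $g(x) \neq g(y)$, and note that $\alpha V_\alpha g(z) = \int_0^\infty e^{-s} g(\Phi_{s/\alpha}(z))\, ds \to g(z)$ as $\alpha \to \infty$ along the rationals, by right-continuity of $t \mapsto g(\Phi_t z)$ at $0$ and dominated convergence. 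Let $\mathcal{T}$ be the coarsest topology on $E$ making every $f \in \mathcal{R}$ continuous.

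To check that $(E, \mathcal{T})$ is Lusin with Borel $\sigma$-algebra $\mathcal{B}$, enumerate $\mathcal{R} = \{f_n\}$ and consider $\iota : E \to [-M, M]^{\mathbb{N}}$, $\iota(x) = (f_n(x))_n$, which is injective, $\mathcal{B}$-measurable, and a homeomorphism from $(E, \mathcal{T})$ onto its image. Kuratowski's theorem on injective Borel images in Polish spaces shows that $\iota(E)$ is Borel in the compact metrizable product, hence $(E, \mathcal{T})$ is Lusin. Furthermore $\mathcal{B}(E, \mathcal{T}) = \sigma(\mathcal{R}) \subseteq \mathcal{B}$, and Blackwell's theorem, applied to the countably generated sub-$\sigma$-algebra $\sigma(\mathcal{R})$ of the Lusin $\sigma$-algebra $\mathcal{B}$ which separates points, provides the reverse inclusion.

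The continuity properties of the flow rest on the explicit identity
\begin{equation*}
V_\alpha g(\Phi_s(x)) = e^{\alpha s} V_\alpha g(x) - e^{\alpha s} \int_0^s e^{-\alpha u} g(\Phi_u(x))\, du, \qquad s \geq 0,
\end{equation*}
which follows from (sd1) and the definition of $V_\alpha$. Taking $f = V_\alpha g \circ \Phi_r \in \mathcal{R}$ and setting $s = r + t$, this identity makes $t \mapsto f(\Phi_t(x))$ continuous in $t$ for each $x$, hence $t \mapsto \Phi_t(x)$ is $\mathcal{T}$-continuous. For continuity of $x \mapsto \Phi_t(x)$ it suffices to see each $f \circ \Phi_t$ is $\mathcal{T}$-continuous; but $f \circ \Phi_t = V_\alpha g \circ \Phi_{r+t}$, and the same identity yields the uniform estimate
\begin{equation*}
\|V_\alpha g \circ \Phi_{s'} - V_\alpha g \circ \Phi_s\|_\infty \leq \frac{2}{\alpha}|e^{\alpha s'} - e^{\alpha s}| \|g\|_\infty + e^{\alpha s}|s' - s|\|g\|_\infty,
\end{equation*}
so approximating $r + t$ by rationals $r + t_n$ exhibits $f \circ \Phi_t$ as a uniform limit of members of $\mathcal{R}$, hence as a $\mathcal{T}$-continuous function.

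For tightness of the capacity, $(E, \mathcal{T})$ being Lusin makes $\lambda$ Radon; I would choose compacts $L_n \nearrow$ with $\lambda(E \setminus L_n) \to 0$ and an auxiliary sequence $T_n \nearrow \infty$. The joint $\mathcal{T}$-continuity just established makes $K_n := \Phi_{[0, T_n]}(L_n)$ compact and increasing. Testing $c^\beta_\lambda(K_n^c)$ with the open set $G = K_n^c$ and splitting the defining integral over $L_n$, $K_n \setminus L_n$, and $K_n^c$, using $D_{K_n^c} \geq T_n$ on $L_n$ and the $\beta$-excessivity of $u_o$ on the remaining parts gives the bound
\begin{equation*}
c^\beta_\lambda(K_n^c) \leq e^{-\beta T_n} \|u_o\|_\infty \lambda(E) + 2\|u_o\|_\infty \lambda(L_n^c),
\end{equation*}
which vanishes as $n \to \infty$ provided $T_n$ is chosen large enough. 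The main delicacy is the design of $\mathcal{R}$: inserting $\Phi_r$ with rational $r$ is what secures $\mathcal{T}$-continuity of $x \mapsto \Phi_t(x)$ for every $t \geq 0$, rather than merely the right-continuity of $t \mapsto \Phi_t(x)$ that (sd4) provides on its own.
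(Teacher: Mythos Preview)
Your argument is correct and close in spirit to the paper's, but the execution differs in instructive ways. Both proofs generate the Lusin topology from a countable family of bounded $\cb$-measurable functions built out of resolvent potentials $V_\alpha g$ together with rational time–shifts $S_r=\,\cdot\,\circ\Phi_r$, and both verify the Lusin property and $\cb(E,\ct)=\cb$ by the standard Kuratowski/Blackwell route. The paper, however, builds a full Ray cone $\calr$, closing $V_\beta(\cC_o)$ under $\wedge$, finite rational sums, $V_\alpha$, and $S_t$ ($t\in\Q_+$); this min-stability lets it plug into the general Ray--Knight machinery and, for the continuity of $x\mapsto\Phi_t(x)$, lets it interpolate from rational to arbitrary $t$ via the monotonicity $t\mapsto S_t u$ (decreasing for $u\in\ce(\cv_\beta)$), writing $S_t u=\sup_{\Q_+\ni t_n\searrow t}S_{t_n}u=\inf_{\Q_+\ni t_n\nearrow t}S_{t_n}u$. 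You instead keep the minimal family $\{V_\alpha g\circ\Phi_r\}$ and replace the monotonicity argument by the explicit identity $V_\alpha g(\Phi_s(x))=e^{\alpha s}V_\alpha g(x)-e^{\alpha s}\int_0^s e^{-\alpha u}g(\Phi_u(x))\,du$, from which a uniform (in $x$) Lipschitz bound in $s$ follows; this simultaneously yields continuity in $t$, continuity in $x$ (by uniform approximation from $\calr$), and in fact the joint continuity of $(t,x)\mapsto\Phi_t(x)$ that your tightness step uses. Finally, the paper obtains tightness of $c^\beta_\lambda$ as a corollary of the continuity of the trajectories by invoking the Lyons--R\"ockner/Beznea--Boboc results, whereas you give a direct, self-contained argument via $K_n=\Phi_{[0,T_n]}(L_n)$; your estimate is correct (up to the harmless replacement of $e^{\alpha s}$ by $e^{\alpha\max(s,s')}$ in the Lipschitz bound). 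In short: the paper's route situates the result within Ray compactification theory and cites general tightness results, while yours is more elementary and entirely self-contained.
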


\begin{proof}
Since by $(sd3)$ we have $\lim_{\alpha \to \infty} \alpha V_\alpha f= f$ pointwise on $E$ for all $f\in \mathcal{C}_o$, it follows that $\mathcal{E}(\cv_\beta)$ generates $\cb(E)$, where  $\beta >0$.
In addition, if $u, v\in \mathcal{E}(\cv_\beta)$ then $u\wedge v:=\inf(u, v)$ also belongs to $\mathcal{E}(\cv_\beta)$, so, all the points of $E$ are non-branch points with respect to
${\mathcal V}_\beta$.

The required Lusin topology $\ct$ is going to be generated by a convex cone of bounded $\cv_\beta$-excessive functions $\calr$, called a {\it Ray cone}.
Let us recall its usual construction, as, e.g.,  in \cite{BeRo11}, the proof of Proposition 2.2:
Let 
$\ds\mathcal{R}_0:=V_{\beta}(\cC_o)\cup {\Q}_{+}$. 
The Ray cone $\mathcal{R}$ is given by the closure in the $\sup$ norm
of
$\bigcup_{n\geqslant 0}\mathcal{R}_n$, where $\mathcal{R}_n$ 
is defined inductively as follows:\\
\centerline{$\mathcal{R}_{n+1}:= {\Q}_{+} \cdot \mathcal{R}_n
\cup (\sum_{f}\mathcal{R}_n) \cup (\bigwedge_{f}\mathcal{R}_n)
\cup (\cup_{\alpha \in {\Q}^{*}_{+}} V_{\alpha}(\mathcal{R}_n))
\cup (\cup_{t \in {\Q}^{*}_{+}} S_t(\calr_n))\cup
V_{\beta}((\mathcal{R}_n-\mathcal{R}_n)_{+})$,}\\
where $\bigwedge_f \calr_n$  is the set of all functions of the form $u_1\wedge u_2\wedge \cdots \wedge u_k$
with $u_i\in \calr_n ,$ $i\leqslant  k$,   
and ${\sum}_f \Q_+\!\! \cdot\! \calr_n$ 
is the set of all functions of the form $q_1u_1+q_2u_2+ \cdots +q_ku_k$  with $q_i\in \Q_+$.

Note that $\calr$ generates $\cb(E)$ which is thus the  Borel $\sigma$-algebra of $\ct$.
Since $t\longmapsto S_t V_\alpha f(x)$ is continuous and
$S_t(u\wedge v)=S_tu \wedge S_t v$, it follows inductively  that 
$t\longmapsto S_t u(x) $ is continuous on $[0, \infty)$ for all $x\in E$ and $u\in \bigcup_{n\geqslant 0}\mathcal{R}_n$,  and therefore for all $u\in \calr$.
Hence $t\longmapsto u(\Phi_t(x))$ is continuous on $[0, \infty)$ for all $u\in \calr$, that is, $\Phi$ is a $\ct$-continuous  flow.

We have $S_t (\calr) \subset \calr$ for all $t\in\Q_+$. So, clearly, $S_t u$ is $\ct$-continuous on $E$ if  $t\in\Q_+$ and
therefore  $x\longmapsto \Phi_t(x)$ is   $\ct$-continuous on $E$ for all $t\in\Q_+$.
Because for all $u\in \calr$  the function $t\longmapsto S_t u(x) $ is decreasing, it follows that
$S_t u=\sup_{\Q_+\ni t_n\searrow t} S_{t_n} u = \inf_{\Q_+\ni t_n\nearrow t} S_{t_n} u$ and thus the function $S_t u$ is $\ct$-continuous on $E$ for all $t> 0$.
We conclude that  $x\longmapsto \Phi_t(x)$ is $\ct$-continuous on $E$ for all $t\geqslant 0$.

According to  \cite{LyRo92} and \cite{BeBo05} (see also \cite{MaRo92},  \cite{BeRo11a}, and \cite{BeTr11}), 
the tightness property of the capacity $c^\beta_\lambda$ is a direct consequence of the  continuity of the trajectories of $\Phi$ in the topology $\ct.$
\end{proof}

\begin{rem} \label{rem2.3}  
$(i)$ The Lusin  topology from the above theorem is actually a {\rm Ray topology} with respect to the resolvent  $(V_\alpha)_{\alpha >0}$ of  $\Ss$;  for details see e.g. \cite{BeBo05} and \cite{BeBoRo06a}.

 $(ii)$ Theorem \ref{thm2.2} extends a result about right continuous flows from \cite{Sh88}, (47.8) at page 220.
\end{rem}

\section{The extended weak generator}\label{sec3}  
In this section we extend to unbounded real-valued functions  
 the classical weak generator acting on bounded functions, considered by E.B. Dynkin (cf. \cite{Dy65}   pag. 55);
see also  \cite{Fi88} and \cite{Li11}. 
Notice that an extended generator was considered in \cite{Bou81} (and the references therein), however, only for bounded functions in the domain of the operator.
Also, we shall complete the approach from  \cite{HiYo12}.

Let  $\Tt =(T_t)_{t\geqslant 0}$ be a sub-Markovian transition function with induced  resolvent $\cu=(U_\alpha)_{\alpha > 0}$, and set
\begin{equation}\label{eq:B0}
\cb^0 = \cb^0(\Tt) \! := \{ f \in \!{[\cb]}:   T_t(|f|)< \infty \mbox{ for all } t>0 \mbox{ and } f = \! \lim_{s\searrow 0} T_s f \mbox{ pointwise on } E \}    
\end{equation}

\vspace{1.5mm}

Clearly, we have $[\ce_\alpha] \subset \cb^0=\cb^0(\Tt_\alpha)$ for every $\alpha\geqslant 0$.  
If  $\Tt =(T_t)_{t\geqslant 0}$ is the transition function of a right Markov process with (Lusin topological) state space $E$, 
then every bounded finely continuous function belongs to $\cb^0$, in particular,  $bC(E)\subset \cb^0$. 

Define also
\begin{align}
\cb_e&:= \{ f \in [\cb]: \exists\; h\in \ce \mbox{ with } |f|\leqslant h  \mbox{ and } f = \! \lim_{s\searrow 0} T_s f \mbox{ pointwise on } E  \}, \\
\cb_o &= \cb_o(\Tt)\\
&:= \{ f \in \cb^0: \forall  \alpha >0 \;\exists\;  t_o > 0, h_\alpha \in p\cb \mbox{ such that } \sup_{0<s< t_o}\! T_s | f |  \leqslant h_\alpha  \mbox{ and }  U_\alpha h_\alpha < \infty  \} \nonumber\\
\cb_{oo}&=  \cb_{oo} (\Tt)\\
&:= \{ f\in \cb_o:  \forall t>0 \;\exists \;   t_o > 0, h_t\in p\cb \mbox{ such that } 
\sup_{0<s< t_o}\!   T_s | f | \leqslant h_t \mbox{ and }  T_t h_t < \infty   \}.\nonumber
\end{align}
Several properties of the sets $\cb^0$, $\cb_e$, $\cb_o$,  and $\cb_{oo}$  are collected in the following lemma, whose proof is included in Appendix {(A.1)}.

\begin{lem} \label{lem3.1} 
The following assertions hold.  

\begin{enumerate}
    \item[(i)] For each $\alpha >0$ one has  $U_\alpha(\cb_o) \subset \cb_{oo}$ and if $t>0$ then $T_t (\cb_{oo})\subset   \cb_{oo}$. 
    If $\beta>0$ then  $ \cb_o =   \cb_o(\Tt_\beta)$ and   $\cb_{oo} =  \cb_{oo} (\Tt_\beta)$. 
    \item[(ii)] If $\alpha, t >0$ then  $U_\alpha(\cb_e) \subset \cb_{e}\subset \cb_{oo}$ and $T_t (\cb_{e})\subset   \cb_{e}$.
    \item[(iii)] We have {$b\cb_o= b\cb_{oo}=b\cb_e= b\cb^0$.}
    \item[(iv)] We have $[\cE]\cup b[\cE_\alpha] \subset \cb_{oo}$, $\alpha >0$. 
    If $f\in [\cb]$ is such that $U(|f|) <\infty$ then $Uf\in \cb_{oo}$.
\end{enumerate}
\end{lem}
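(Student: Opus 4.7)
I would prove the four assertions in the order (iii), (ii), (iv), (i), since each part draws on its predecessors. The tools throughout are the semigroup identity $T_s U_\alpha = e^{\alpha s}\bigl(U_\alpha - \int_0^s e^{-\alpha r} T_r \d r\bigr)$, the commutativity $T_s T_t = T_{s+t}$, the resolvent equation $U_\gamma U_\alpha = (\alpha - \gamma)^{-1}(U_\gamma - U_\alpha)$ for $\gamma \neq \alpha$ (with $U_\alpha^2 = \int_0^\infty t\, e^{-\alpha t} T_t \d t$ on the diagonal), and the supermedian bounds $T_s u \leqslant u$, $\alpha U_\alpha u \leqslant u$ for $u \in \ce$.

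\medskip

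For (iii), a bounded $|f| \leqslant M$ gives $\sup_{s>0} T_s|f| \leqslant M \cdot 1$ by sub-Markovianity, with $U_\alpha M \leqslant M/\alpha$ and $T_t M \leqslant M$; since $1 \in \ce$, this places $b\cb^0$ inside $b\cb_e$, while the reverse chain $b\cb_e \subset b\cb_{oo} \subset b\cb_o \subset b\cb^0$ is built into the definitions. For (ii), an excessive dominant $h \geqslant |f|$ also dominates $\sup_{s>0} T_s|f|$ (since $T_s h \leqslant h$), and $U_\alpha h \leqslant h/\alpha \in \ce$, $T_t h \leqslant h$ simultaneously yield $\cb_e \subset \cb_{oo}$ and the invariance of $\cb_e$ under $U_\alpha$ and $T_t$; the limit property $T_s U_\alpha f \to U_\alpha f$ comes from $\alpha$-excessiveness of $U_\alpha f^{\pm}$, and $T_s T_t f = T_{t+s}f \to T_tf$ from dominated convergence with dominant $h$. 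Assertion (iv) then reduces to (ii): if $f = u - v \in [\ce]$ then $|f| \leqslant u + v \in \ce$; if $f \in b[\ce_\alpha]$ then $f \in b\cb^0 = b\cb_{oo}$ after noting fine continuity of $\alpha$-excessive functions for the limit condition; and if $U|f| < \infty$ then $Uf = Uf^+ - Uf^-$ with $Uf^{\pm} \leqslant U|f| \in \ce$, so $Uf \in [\ce]$.

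\medskip

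Assertion (i) is the substantive one. The bound $T_s U_\alpha|f| \leqslant e^{\alpha s} U_\alpha|f|$ from the semigroup identity gives $\sup_{0 < s < r} T_s|U_\alpha f| \leqslant e^{\alpha r} U_\alpha|f|$. To verify $U_\alpha f \in \cb_o$, I would apply the resolvent equation to bound $U_\gamma U_\alpha|f|$ by a combination of $U_\gamma|f| \leqslant U_\gamma h_\gamma < \infty$ and $U_\alpha|f| \leqslant U_\alpha h_\alpha < \infty$, treating the diagonal $\gamma = \alpha$ via $U_\alpha^2 \leqslant C_\alpha U_{\alpha/2}$ and the dominant $h_{\alpha/2}$; the $\cb_{oo}$ property then follows from $T_t(e^{\alpha r} U_\alpha|f|) \leqslant e^{\alpha(r+t)} U_\alpha|f|$. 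For $T_t(\cb_{oo}) \subset \cb_{oo}$, commutativity $T_s T_t = T_{s+t}$ lets $T_t h$ serve as a dominant for $\sup_{0<s<t_o} T_s|T_t f|$, and $T_r T_t h = T_{r+t}h$ is finite by the $\cb_{oo}$-property of $f$ applied at time $r + t$. The technical crux is the equality $\cb_o = \cb_o(\Tt_\beta)$ (and likewise for $\cb_{oo}$): the inclusion $\cb_o(\Tt) \subset \cb_o(\Tt_\beta)$ is immediate from $U_{\beta+\alpha} \leqslant U_\alpha$, whereas the converse requires careful quantifier-tracking, exploiting that the pair $(t_o, h)$ may depend on $\alpha$ and that $e^{-\beta s}$ stays in $[e^{-\beta t_o}, 1]$ on $(0, t_o)$, combined with the resolvent equation to pass from $U_{\alpha+\beta}$-integrability of a dominant to $U_\alpha$-integrability of a suitably modified one. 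This last passage is the main obstacle I would expect; once it is handled, the rest is a clean application of the semigroup and resolvent identities.
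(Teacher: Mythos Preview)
Your approach for (ii), (iii), and (iv) matches the paper's almost line for line; the one slip is in (iii), where you assert $1 \in \ce$. For a merely sub-Markovian semigroup this need not hold---what is excessive is $\widehat{1} := \lim_{t \searrow 0} T_t 1$, and the paper uses precisely this: if $f \in b\cb^0$ with $|f| \leqslant 1$ then $|f| = \lim_{s \searrow 0}|T_s f| \leqslant \lim_{s \searrow 0} T_s 1 = \widehat{1} \in \ce$.

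For (i) you and the paper diverge in the choice of dominant for $U_\alpha f$. The paper commutes $T_s$ with $U_\alpha$ to obtain $T_s|U_\alpha f| \leqslant U_\alpha T_s|f| \leqslant U_\alpha h_{\alpha_o}$, where $\alpha_o := \min(\alpha, \alpha')$, so that the single function $U_\alpha h_{\alpha_o}$ is a dominant independent of $s$ and has $U_{\alpha'}U_\alpha h_{\alpha_o} < \infty$ directly from $U_{\alpha_o}h_{\alpha_o} < \infty$. Your route---bounding $T_s U_\alpha|f| \leqslant e^{\alpha s}U_\alpha|f|$ via $\alpha$-excessiveness and then invoking the resolvent equation for $U_\gamma U_\alpha|f|$---is equally valid; the paper's version is slightly cleaner in that the dominant carries no factor $e^{\alpha r}$ and the diagonal case $\gamma = \alpha$ needs no separate treatment. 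For $T_t(\cb_{oo}) \subset \cb_{oo}$ the paper makes explicit what your phrase ``applied at time $r+t$'' amounts to: it takes $h := \inf(h_t, h_{t+t'}, h_\alpha)$, with a correspondingly smaller $t_o$, so that one dominant simultaneously witnesses $T_t h < \infty$, $T_{t'}T_t h < \infty$, and $U_\alpha T_t h \leqslant e^{\alpha t}U_\alpha h < \infty$.

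You correctly flag the equality $\cb_o = \cb_o(\Tt_\beta)$ as the delicate point. The paper's appendix proof of (i) in fact does not address this clause at all, so there is nothing to compare your sketch against; your instinct that the converse inclusion is where the difficulty lies is well-placed.
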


\begin{cor} \label{cor2.2} 
If $\Tt =(T_t)_{t\geqslant 0}$ is the transition function of a right Markov process with Lusin topological state space $E$, and $f\in C(E)$ is such that there exists $h\in \ce$ with $|f|\leqslant h$, then $f\in \cb_{e}$. 
In particular, $bC(E)\subset \cb_{e}$.
\end{cor}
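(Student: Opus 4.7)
The plan is to verify the two defining conditions of $\cb_e$ for the given continuous $f$: Borel measurability together with the domination $|f|\leqslant h$ by some $h\in\ce$, and the pointwise short-time limit $\lim_{s\searrow 0} T_s f = f$. The first is immediate since $f\in C(E)$ is Borel measurable and the domination is exactly the hypothesis. The nontrivial point is therefore to establish $T_s f(x)\to f(x)$ at each $x\in E$.

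Fix $x\in E$ and write $T_s f(x) = \Ee^x f(X_s)$ (extending $f$ by $0$ at the cemetery). Since $X$ has right-continuous paths in the Lusin topology of $E$ and $f\in C(E)$, one gets $f(X_s)\to f(X_0) = f(x)$ $\Pp^x$-almost surely as $s\searrow 0$, and the pointwise bound $|f(X_s)|\leqslant h(X_s)$ is inherited from $|f|\leqslant h$. It remains to pass to the limit under the expectation.

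The crucial observation is that the dominator $h\in\ce$ is automatically finely continuous, so by the characterization recalled just before this section, $s\mapsto h(X_s)$ is $\Pp^x$-a.s. right-continuous on $[0,\zeta)$; in particular $h(X_s)\to h(x)$ $\Pp^x$-a.s. Simultaneously, $\Ee^x h(X_s) = T_s h(x)\to h(x)<\infty$ as $s\searrow 0$ by the very definition of excessivity. Scheff\'e's lemma then yields $h(X_s)\to h(x)$ in $L^1(\Pp^x)$, and the generalized dominated convergence theorem (with varying integrable dominators converging in $L^1$ rather than a fixed dominator) applied to $f(X_s)$ produces $\Ee^x f(X_s)\to f(x)$, i.e., $T_s f(x)\to f(x)$. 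This gives $f\in\cb_e$.

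For the final assertion, any $f\in bC(E)$ satisfies $|f|\leqslant \|f\|_\infty\cdot\mathbf{1}$, and the constant function $\mathbf{1}$ belongs to $\ce$ under the standard right-process convention $\Pp^x(\zeta>0)=1$ (since $T_t\mathbf{1}(x) = \Pp^x(t<\zeta)\leqslant 1$ and tends to $1$ as $t\searrow 0$). The first part then yields $bC(E)\subset\cb_e$. The principal difficulty is precisely the lack of a uniform integrable dominator for $f(X_s)$: the only natural bound is $h(X_s)$ itself, which is not constant and forces the use of the extended dominated convergence theorem together with Scheff\'e rather than the classical DCT; this is what makes domination by an \emph{excessive} function (as opposed to just any integrable function) the right hypothesis.
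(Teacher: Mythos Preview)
Your proof is correct. The paper states this corollary without proof, treating it as an immediate consequence of the surrounding material (the characterization of fine continuity via right continuity of $t\mapsto f(X_t)$, together with Lemma~\ref{lem3.1}); your argument makes explicit exactly the step the paper leaves to the reader, namely passing to the limit in $\Ee^x f(X_s)$ when $f$ is unbounded but dominated by an excessive $h$. The Scheff\'e/generalized-DCT device you use is the standard and natural way to do this, exploiting precisely that excessivity gives both the almost-sure convergence $h(X_s)\to h(x)$ (fine continuity) and the convergence of means $T_s h(x)\to h(x)$.

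One small remark on the bounded case: the paper's own argument in the proof of Lemma~\ref{lem3.1}\,(iii) dominates a bounded $f\in\cb^0$ by the excessive regularization $\widehat{1}:=\lim_{t\searrow 0}T_t 1$ rather than by $1$ itself, which works for an arbitrary sub-Markovian transition function. Your use of $1\in\ce$ is fine in the present setting because for a right Markov process normality forces $\Pp^x(\zeta>0)=1$, hence $\widehat{1}=1$; but it is worth noting that the paper's formulation avoids this extra (here harmless) appeal to the right-process hypotheses.
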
 
 
Further, let us consider 
\begin{equation}
\begin{split}
    \cd(L):= \biggl\{u\in \cb_o : \forall \alpha >0\; \exists t_o>0,h_\alpha \in p\cb &\mbox{ with } \sup_{0<t<t_o} \left|\frac{T_tu- u}{t }\right| \leq h_\alpha, U_\alpha h_\alpha<\infty, \\
    &\mbox{ and } \lim_{t\searrow 0} \frac{T_tu- u}{t}\in \cb_o \mbox{ pointwise on } E \biggr\} 
\end{split}
\end{equation}
Clearly, $\cb_o$, $\cb_{oo}$, $\cb_e$, and $\cd (L)$  are  vector spaces 
and define the linear operator  
\begin{equation}
L: \cd(L)\rightarrow  \cb_o, \quad  Lu(x):=  \lim_{t\searrow 0} \frac{T_tu (x)- u (x)}{t},\;f\in \cd(L), \;x\in E.   
\end{equation}
Define  also  
\begin{equation}
    \cd_o(L)  := \{ u\in \cd(L): Lu\in \cb_{oo} \} \quad  \mbox{ and } \quad \cd_e(L) := \{ u\in \cd(L)\cap \cb_e: Lu\in \cb_{e} \}.
\end{equation}
The operator  $(L, {\mathcal{D}}(L))$ is called the {\it extended weak generator} of $\Tt= (T_t)_{t\geqslant 0}$.

\begin{rem}
\begin{enumerate}
    \item[(i)] Recall the definition of the {\rm weak generator}   $(L_w, \cd(L_w))$ considered in \cite{Dy65}: $\cd(L_w)$ is the set of all bounded functions $ f\in \cb^0$ such that  $\left( \frac{T_tf(x)- f(x)}{t}\right)_{t, x}$ is bounded for $x\in E$ and  $t$    in a neighbourhood of  zero, there exists $\lim_{t\searrow 0} \frac{T_tf - f}{t}$   pointwise and the above limit is an element of $\cb^0$. 
    If $\alpha >0$ then   $\cd(L_w)=U_{\alpha}(b\cb^0)$, it is independent of ${\alpha} >0$ and if $u= U_{\alpha} f$ with $f\in \cb^0$, then $({\alpha} -L_w) u= f$.
    \item[(ii)] In \cite{HiYo12} an  {\it extended generator}  $(\overline{L}, {\mathcal{D}}(\overline{L} ))$  of $\Tt= (T_t)_{t\geqslant 0}$ was considered by taking into account unbounded real-valued functions also, as follows: Let $u, g \in \cb^0,$  then $u$ belongs to the domain  $\mathcal{D} (\overline{L})$ of $\overline{L}$ and $g=\overline{L} u$ provided that 
    \begin{equation} \label{ext-generat}
    \forall \; t > 0, x\in  E \mbox{  we have }  \! \int_0^t \!\! T_s(|g|) (x) \d s < \infty  \mbox{ and }  T_t u(x) =  u(x) + \!  \displaystyle\int_0^t \!\! T_s g (x) \d s.
    \end{equation}
    \item[(iii)] Assume that $\Tt= (T_t)_{t\geqslant 0}$ is the transition function of a right Markov process $X= (\Omega, \cf_t, X_t,  \Pp^x)$  with Lusin topological state space $E$. 
    According to \cite{HiYo12}, Proposition 4.1 (see also \cite{Fi88}, page 354, the proof of Theorem (4.1)), we have the following equivalent definition for the extended generator: If  $u, g\in \cb^0$  then $u \in \cd(\overline{L})$   and $\overline{L} u=g$  if and only if for all $x\in E$ we have {$ \displaystyle \int_0^t \!\! T_s(|g|) (x) \d s < \infty$ for all $t > 0$  and  $\left( u(X_t)- uX_0)- \displaystyle \int_0^t g(X_s) \d s \right)_{t\geqslant 0} $ is a $(\cf_t)$-martingale under $\Pp^x$.}
\end{enumerate}    
\end{rem}

The next result collects  properties of the extended weak generator.
Several arguments used in the proof 
are similar to the case of the 
$C_0$-semigroups of contractions on a Banach space of functions; see, e.g., \cite{EthKu86}, Ch. 1, section 2. 
In particular, assertion $(viii)$ below is a pointwise version of Theorem 1.3 from \cite{Dy65}, Ch. I, section 3.
For the reader convenience we present its proof in Appendix (A.2).

\begin{prop} \label{prop3.3} 
The following assertions hold for a sub-Markovian transition function  $\Tt =(T_t)_{t\geqslant 0}$, 
its  resolvent $\cu=(U_\alpha)_{\alpha > 0}$, and the extended weak generator  $(L, {\mathcal{D}}(L))$.

\begin{enumerate}
    \item[(i)] If $\alpha >0$ then   $\cd(L)=U_{\alpha}(\cb_o)$ and  it is independent of ${\alpha} >0$. 
    If $f\in \mathcal{B}^0(\Tt)$, $\alpha \geqslant 0$ and $u=U_\alpha f$ then $({\alpha} - {L}) u= f$. 
    If $f\in b\cb^0$, $t>0$, and $u=\int_0^t T_sf \d s$ then $u\in \cd(L)$ and $Lu=  T_t f -f$.
    \item[(ii)] The operator $(\overline{L}, {\mathcal{D}}(\overline{L}))$ is well defined and we have $\overline{L}u (x)= \lim_{t\searrow 0} \frac{T_tu  (x)- u (x)}{t},$  $x\in E$,  $u\in \cd(\overline{L})$.
    \item[(iii)] We have $\cd(L_w)\subset  \cd_e(L) \subset \cd(L)= \{u \in \cd (\overline{L})\cap \cb_{o}: \overline{L} u \in \cb_o\} \subset \cb_{oo}$, $\overline{L}|_{\cd(L)}= L$, and $L|_{\cd(L_w)} = L_w$.
    \item[(iv)] One has $\cd_o(L)= U_{\alpha}(\cb_{oo})$ for each $\alpha >0$. If  $t >0$ then $T_t(\cd_o(L))\subset \cd_o(L)$, $T_t(\cd(\overline{L}))\subset \cd(\overline{L})$, $\overline{L}\circ T_t = T_t \circ \overline{L}$ on $\cd(\overline{L})$, and $L\circ T_t = T_t \circ L$ on $\cd_o(L)$.
    \item[(v)] If $\beta >0$ and $(L^\beta , \cd(L^\beta ) )$ (resp. $(\overline{L^\beta} , \cd(\overline{L^\beta} ) )$ denotes the extended weak generator  (resp. the extended generator) of the  transition function  $\Tt_\beta$, then  $\cd (L)\subset \cd (L^\beta )$ (resp. $\cd (\overline L)\subset \cd (\overline{L^\beta} )$), $L^\beta  u = Lu -\beta u$ for every $u\in \cd(L)$ (resp. $\overline{L^\beta}  u = \overline Lu -\beta u$ for every $u\in \cd(\overline L)$), and $\cd_o(L) =  \cd_o(L^\beta )$.
    \item[(vi)] We have $\cd_e(L)= U_\alpha (\cb_e)\subset \cd_o(L)$ for each $\alpha >0$ and if  $t >0$ then $T_t(\cd_e(L))\subset \cd_e(L)$.
    \item[(vii)] Let 
$
\cd^c_o(L):= \{ u\in \cd_o(L):  [0, \infty)\ni  t\longmapsto LT_t u (x)  \mbox{ is continuous for each } x \in E\}.
$ 
 If $t, \alpha >0$ then $T_t(\cd^c_o(L))\subset \cd^c_o(L)$  and 
 $U_\alpha (\cd(L)) \subset \cd^c_o(L)$.
 If $\beta >0$ then
 $U_\beta U_\alpha ({b[\cb]})\subset  \cd^c_o(L)$.
    \item[(viii)] If $u\in b\cd^c_o(L)$  then $[0, \infty) \ni t\longmapsto T_tu(x)$ is continuously differentiable for each $x\in E$ and $(T_tu(x))' = LT_t u(x)$. 
    Moreover,  $u_t := T_t u$, $t\geqslant 0$,  is the unique solution of the equation
    \begin{equation} \label{eq3.3}
    \frac{\d u_t}{\hspace{-1.5mm}\d t} = L u_t, t\geqslant 0, 
    \end{equation}
    such that $u_0=u$, $u_t \in \cd_o(L)$, $\| u_t \|_\infty$ is bounded, $Lu_t\in \cb_{oo}$, and  $[0, \infty )\ni t\longmapsto L u_t(x)$ is continuous  for all $x\in E$.
\end{enumerate}
\end{prop}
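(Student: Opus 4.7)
The proof rests on three recurring ingredients: the explicit identity
\[
T_t U_\alpha f = e^{\alpha t}\Bigl( U_\alpha f - \int_0^t e^{-\alpha s} T_s f\,\d s \Bigr),
\]
the integral characterization $T_tu - u = \int_0^t T_s(\overline L u)\,\d s$ built into the definition of $\overline L$, and the stability of $\cb^0,\cb_o,\cb_{oo},\cb_e$ under $T_t$ and $U_\alpha$ provided by Lemma \ref{lem3.1}. The commutation $T_tU_\alpha = U_\alpha T_t$ is immediate from Fubini.

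For (i), dividing the displayed identity by $t$ and letting $t\searrow 0$ yields $L(U_\alpha f) = \alpha U_\alpha f - f$; the required uniform domination for $(T_tU_\alpha f - U_\alpha f)/t$ on a small interval comes from bounding the averaged integral by $\sup_{0<s<t_o}T_s|f|$, which lies in $\cb_o$ precisely because $f\in \cb_o$. The claim about $u = \int_0^t T_sf\,\d s$ follows by telescoping $(T_hu - u)/h$. For (ii), well-definedness and the pointwise formula both follow by dividing \eqref{ext-generat} by $t$ and sending $t\searrow 0$, with $g\in \cb^0$ guaranteeing the limit equals $g(x)$. Part (iii) decomposes into three containments: $\cd(L_w)\subset \cd_e(L)$ follows from Lemma \ref{lem3.1}(iii) identifying $b\cb^0=b\cb_e$ together with the automatic boundedness of the difference quotient; the identification $\cd(L) = \{u\in \cd(\overline L)\cap \cb_o:\overline L u\in\cb_o\}$ is obtained by averaging the integral representation and applying dominated convergence; and $\cd(L)\subset \cb_{oo}$ comes from $\cd(L)=U_\alpha(\cb_o)\subset \cb_{oo}$ via Lemma \ref{lem3.1}(i).

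Parts (iv)--(vi) are bookkeeping: the intertwining $\overline L\circ T_t = T_t\circ\overline L$ on $\cd(\overline L)$ follows from $T_s T_tu - T_tu = \int_0^s T_r(T_t\overline Lu)\,\d r$ combined with the uniqueness part of (ii), while the stability of the various domains under $T_t$ and $U_\alpha$ uses Lemma \ref{lem3.1}(i)--(ii). The shift $L^\beta u = Lu - \beta u$ in (v) is a direct difference-quotient calculation, and $\cd_o(L^\beta)=\cd_o(L)$ is forced by Lemma \ref{lem3.1}(i), which shows $\cb_o,\cb_{oo}$ do not depend on $\beta$. For (vii), the key point is that the resolvent identity makes $t\mapsto T_tU_\gamma v(x)$ continuous for every $v\in \cb_o$; so for $u=U_\gamma v\in \cd(L)$, both terms in $LT_tU_\alpha u = \alpha T_tU_\alpha u - T_tu$ are continuous in $t$. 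The inclusion $U_\beta U_\alpha(b[\cb])\subset \cd^c_o(L)$ then follows because $U_\alpha(b[\cb])\subset b\cb_o$ again by the same identity, so $U_\beta U_\alpha f\in U_\beta(\cb_o)=\cd(L)$ and the previous continuity argument applies verbatim.

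Finally, (viii) is the main payoff. For $u\in b\cd^c_o(L)$, (iii) places $u\in \cd(\overline L)$ with $\overline L u = Lu$, and (iv) gives $LT_su = T_sLu$; the integral representation then yields $T_{t+h}u(x) - T_tu(x) = \int_t^{t+h}T_sLu(x)\,\d s$, and the continuity of $s\mapsto T_sLu(x)$ (precisely the defining property of $\cd^c_o(L)$) converts the fundamental theorem of calculus into $(T_tu(x))' = LT_tu(x)$; $\|T_tu\|_\infty\le \|u\|_\infty$ is sub-Markovianity and the remaining regularity claims are stability results from (iv)--(vii). For uniqueness, given another solution $v_t$ with the stated regularity, the Duhamel computation
\[
\frac{\d}{\d t}T_{s-t}v_t(x) = -LT_{s-t}v_t(x) + T_{s-t}Lv_t(x) = 0
\]
shows $T_{s-t}v_t$ is constant in $t\in[0,s]$, so $v_s = T_su$. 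The main obstacle is to justify this chain-rule computation with only pointwise regularity at hand; the clean way is to expand both $T_{s-t}$ (acting on the $\cd(\overline L)$-element $v_t$) and $v_t$ via the integral representation, producing a double integral that Fubini turns into a tractable expression whose $t$-derivative is computable via the continuity hypothesis, with integrability ensured by the $\cb_{oo}$-majorants built into the definition of $\cd^c_o(L)$.
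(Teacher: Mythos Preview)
Your treatment of (i)--(vii) is essentially identical to the paper's: the same resolvent identity, the same use of Lemma~\ref{lem3.1} for stability of the classes $\cb_o,\cb_{oo},\cb_e$, and the same integral representation for passing between $L$ and $\overline L$. No meaningful difference there.

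The one genuine divergence is the uniqueness argument in (viii). You go for the Duhamel route: show $\frac{\d}{\d t}T_{s-t}v_t(x)=0$ on $[0,s]$. The paper instead follows Dynkin's Laplace-transform argument: reduce to $u_0=0$, set $v_t:=e^{-\alpha t}u_t$, observe $\frac{\d v_t}{\d t}=(L-\alpha)v_t$ so that $U_\alpha\bigl(\frac{\d v_t}{\d t}\bigr)=-v_t$, integrate to get $\int_0^t e^{-\alpha s}u_s(x)\,\d s = -e^{-\alpha t}U_\alpha u_t(x)$, and send $t\to\infty$ using the boundedness of $\|u_t\|_\infty$ to conclude $\int_0^\infty e^{-\alpha s}u_s(x)\,\d s=0$ for every $\alpha>0$, hence $u_s\equiv 0$ by Laplace uniqueness. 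This sidesteps precisely the obstacle you flag: no chain rule on $t\mapsto T_{s-t}v_t$ is needed, and the only regularity used is boundedness plus the pointwise ODE. Your approach is the standard Banach-space semigroup argument, but the hypothesis is only $v_t\in\cd_o(L)$ (not $\cd^c_o(L)$), so the differentiability of $r\mapsto T_r v_t$ from the first half of (viii) is not directly available for each frozen $t$; your proposed fix via double integrals and Fubini is plausible but left at the level of a sketch. The paper's resolvent trick is cleaner under these weak pointwise assumptions.
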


\begin{cor} \label{cor3.4} 
Assume that  $\Tt =(T_t)_{t\geqslant 0}$ is the transition function of a right Markov process $X=(\Omega, \cf, \cf_t, X_t, \Pp^x)$ with Lusin topological state space $E$.
Then the following assertions hold.
\begin{enumerate}
    \item[(i)] If $f\in C(E)$ is such that there exists $h\in \ce$ with $|f|\leqslant h$, then $U_\alpha f \in \cd_e(L)$ for each $\alpha >0$. In particular, $U_\alpha (bC(E) )\subset \cd_{e}(L)$. 
    The above assertions are still  true if we replace the continuity condition by the weaker one of fine continuity.
    \item[(ii)] If $\Tt =(T_t)_{t\geqslant 0}$ is a {\rm Feller semigroup}, i.e., each kernel $T_t$, $t>0$, leaves invariant $bC(E)$, then $U_\alpha (bC(E) )\subset \cd_o^c (L)$.
    \item[(iii)] The martingale problem associated with $(\overline{L}, \cd (\overline{L}))$ has a solution. 
    More precisely, for every $u\in \cd (\overline{L})$ and  $x \in E$, the process
    $$
    \left(u(X_t) -u(X_0) -\int_0^t \overline{L}u(X_s) \d s\right)_{t\geqslant 0}
    $$
    is a $(\cf_t)$-martingale under $\Pp^x$.
\end{enumerate}
\end{cor}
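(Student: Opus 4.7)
The plan is to reduce each assertion to the structural results of Section \ref{sec2} and Proposition \ref{prop3.3}.

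For (i), Corollary \ref{cor2.2} directly yields $f\in\cb_e$ under the stated hypothesis, and Proposition \ref{prop3.3}(vi) identifies $\cd_e(L)=U_\alpha(\cb_e)$; hence $U_\alpha f\in\cd_e(L)$. Specializing to $f\in bC(E)$ produces the ``in particular'' clause. To extend to finely continuous $f$ dominated by some $h\in\cE$, I would use the fine-topology characterization that $t\mapsto f(X_t)$ is $\Pp^x$-a.s.\ right continuous on $[0,\zeta)$; together with dominated convergence against $h$ this gives $T_s f\to f$ pointwise, so that the Corollary \ref{cor2.2}-type argument again places $f$ in $\cb_e$.

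For (ii), starting from (i) one has $U_\alpha f\in\cd_e(L)\subset\cd_o(L)$ for every $f\in bC(E)$. By Proposition \ref{prop3.3}(i), $LU_\alpha f=\alpha U_\alpha f - f$, and Proposition \ref{prop3.3}(iv) then yields
\begin{equation*}
LT_t U_\alpha f(x) = T_t(LU_\alpha f)(x) = \alpha\, T_t U_\alpha f(x) - T_t f(x).
\end{equation*}
The Feller invariance $T_t(bC(E))\subset bC(E)$, together with the pointwise convergence $T_s g\to g$ for $g\in bC(E)$ coming from the right-process structure, makes $t\mapsto T_t g(x)$ continuous on $[0,\infty)$ for each $x\in E$: right continuity follows from $T_{t+h}g=T_t(T_h g)$ and bounded convergence applied to the kernel $T_t(x,\cdot)$, while left continuity at $t_0>0$ is obtained via the identity $T_{t_0}g-T_{t_0-h}g=T_{t_0-h}(T_h g-g)$ combined with the quasi-left continuity of the Feller process at deterministic times. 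Applying this to both $g=U_\alpha f$ and $g=f$ gives $U_\alpha f\in\cd_o^c(L)$.

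For (iii), fix $u\in\cd(\overline L)$ and set $g:=\overline L u$, so that \eqref{ext-generat} holds. Integrability of $M_t:=u(X_t)-u(X_0)-\int_0^t g(X_s)\d s$ under each $\Pp^x$ follows from $u\in\cb^0$ (whence $\Ee^x|u(X_t)|=T_t|u|(x)<\infty$) together with the hypothesis $\int_0^t T_s|g|(x)\d s<\infty$ built into $\cd(\overline L)$. The martingale identity is then a direct consequence of the simple Markov property at time $t$, combined with Fubini:
\begin{equation*}
\Ee^x\bigl[M_{t+h}-M_t\mid\cf_t\bigr] = T_h u(X_t) - u(X_t) - \int_0^h T_s g(X_t)\d s = 0,
\end{equation*}
the last equality being \eqref{ext-generat} evaluated at the point $X_t$.

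The step I expect to require most care is the pointwise continuity of $t\mapsto T_t g(x)$ used in (ii): the paper's Feller hypothesis is the relatively weak invariance of $bC(E)$ rather than $C_0$-strong continuity on a Banach space, so this continuity has to be extracted from the interplay between that invariance and the right-process structure (quasi-left continuity at fixed times and pointwise $T_s g\to g$), rather than from a purely semigroup-theoretic estimate. Parts (i) and (iii) are essentially bookkeeping against the definitions already in place.
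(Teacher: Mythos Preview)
The paper gives no explicit proof of this corollary, so the comparison is against what its framework naturally supplies. Your arguments for (i) and (iii) are correct and are exactly what the surrounding results are set up to deliver: (i) is Corollary~\ref{cor2.2} (and its fine-topology variant, via the generalized dominated convergence theorem with the excessive majorant $h$) fed into Proposition~\ref{prop3.3}(vi); (iii) is the direct verification behind the remark preceding Proposition~\ref{prop3.3}, which the paper attributes to \cite{HiYo12} and \cite{Fi88}.

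For (ii) there is a genuine gap, and you have located it yourself. Your reduction $LT_tU_\alpha f=\alpha T_tU_\alpha f-T_tf$ is correct, and continuity of the first term follows from the integral representation $T_tU_\alpha f=e^{\alpha t}\int_t^\infty e^{-\alpha s}T_sf\,\mathrm ds$ (as in the proof of Proposition~\ref{prop3.3}(vii)), with no Feller assumption needed. The issue is entirely the left continuity of $t\mapsto T_tf(x)$. Your appeal to ``quasi-left continuity of the Feller process at deterministic times'' is not justified by the hypotheses as stated: the paper's Feller condition is merely $T_t(bC(E))\subset bC(E)$ on a Lusin space, and $X$ is only assumed to be a right process, not Hunt. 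Under these assumptions you have neither existence of left limits for the paths nor the ``no fixed discontinuities'' property, so the step $T_{t_0-h}(T_hf-f)(x)\to 0$ does not go through: the integrating measures $T_{t_0-h}(x,\cdot)$ vary with $h$, and pointwise convergence $T_hf\to f$ is not enough. The identity $T_{t_0}g-T_{t_0-h}g=T_{t_0-h}(T_hg-g)$ would close if one had $\|T_hg-g\|_\infty\to 0$, i.e.\ strong continuity on $bC(E)$, but that is a strictly stronger Feller hypothesis than the one assumed. In short: right continuity of $t\mapsto T_tf(x)$ is fine from the right-process structure alone; left continuity requires either a Hunt assumption, a $C_0$-type Feller hypothesis, or some additional argument that the paper does not supply and that your proposal does not fill.
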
 

\vspace{2mm}

The following additional property of  $\Ss =(S_t)_{t\geqslant 0}$ will be considered further on:
\begin{equation}\label{3.4}
 \exists\;\cC_o\subset bp\cb  \mbox{ such that  } 1\in \cC_o,  \cC_o    \mbox{ generates } \cb,  \mbox{ and }  \lim_{t\searrow 0} S_t f (x)= f(x) \mbox{  for all } x\in E.
\end{equation}

\begin{rem}
As a consequence of $(\ref{3.4})$ we have for all $\alpha, \beta >0$:
\begin{equation}\label{3.5}
\mbox{If }  (\ref{3.4}) \mbox{ holds then } \sigma(V_\beta V_\alpha(b\cb_{oo}))= \sigma(V_\alpha(b\cb_{oo}))=\cb,
 \end{equation}
where  $\cv=(V_\alpha)_{\alpha >0}$ is the resolvent of $\Ss$ and $\cb_{oo}=\cb_{oo}(\Ss)$.
Indeed, by $(\ref{3.4})$ if follows that for every $f\in \cC_o$ we have pointwise $lim_{\alpha\to \infty} \alpha V_\alpha f= f$ and therefore 
$\cC_o\subset bp\sigma (V_\alpha (bp\cb))$, hence $\cb=\sigma (\cC_o)\subset \sigma (V_\alpha (bp\cb))$, so, $\sigma (V_\alpha (b[\cb]))=\cb$. 
On the other hand by Lemma \ref{lem3.1} $(iv)$  we have $V_\alpha (b[\cb])\subset b\cb_{oo}\subset  b[\cb]$ and therefore
 $\sigma(b\cb_{oo})=\cb$. 
 By Lemma \ref{lem3.1} $(i)$  $V_\alpha (b\cb_{oo})\subset \cb_{oo}$ and therefore the vector space $V_\alpha (b\cb_{oo})$ 
 does not depend on $\alpha >0$ and $\sigma(V_\alpha (b\cb_{oo})) \subset \cb_{oo}$. 
 The converse inclusion also holds because for every $f\in b\cb_o$ we have
 $\lim_{\alpha\to \infty} \alpha V_\alpha f=f$ pointwise on $E$ and we conclude that the last equality from $(\ref{3.4})$ is proven.
  Observe that  the resolvent equation  implies that the vector space 
 $V_\beta V_\alpha ({b\cb_{oo}})$ also does not depend on $\alpha$ and $\beta$.
 If $f\in b\cb_o$ then $lim_{\beta \to \infty} \beta V_{\beta} V_\alpha f=  V_\alpha f$ pointwise on $E$, hence
 $V_\alpha f\in b\sigma(V_\beta V_\alpha(b\cb_{oo}))$ and therefore $\sigma(V_\alpha(b\cb_{oo}))\subset \sigma(V_\beta V_\alpha(b\cb_{oo}))$ and 
so, the first equality is also clear.    
\end{rem}

\paragraph{Non-autonomous semi-dynamical systems.} Let $(E, \cb)$ be a Lusin measurable space
and let $\Phi=(\Phi_{s,t})_{t\geq s\geq 0}$ be a family of mappings $\Phi_{s,t}:E\rightarrow E, t\geq s\geq 0$.
Inspired by e.g. \cite{Kunita93}, we say that $\Phi$ is a {\it non-autonomous semi-dynamical system} on $E$ provided that the  following conditions are satisfied:
\begin{enumerate}
    \item[(Nsd1)] $\Phi_{s,t}(x)= \Phi_{r,t}(\Phi_{s,r}(x))$  for all $t\geq r\geq s \geq 0$ and $x\in E$;
    \item[(Nsd2)] $\Phi_{s,s}(x)=x$    for all  $s\geq 0$, $x\in E$;
    \item[(Nsd3)] For each $t>0$ the function $ [0,\infty)\times E \ni (s,x) \longmapsto \Phi_{s,s+t}(x)$ is measurable;
    \item[(Nsd4)] There exists a countable set $\cC_o\subset bp\cb$ such that $\cC_o$ separates the points of $E$ and 
$\lim_{t\searrow 0} f(\Phi_{s,s+t}(x))= f(x)$ for all $s\geq 0, x\in E$ and $f\in \cC_o$.
\end{enumerate}
The paths of unique strong solutions to Ito SDEs on $\mathbb{R}^d$ which depend continuously on the initial data are typical examples of such non-autonomous semi-dynamical systems (see e.g. \cite{Fl10}).

Given a $\Phi$ as above, it is a straightforward to check that $\overline{\Phi}:=\left(\overline{\Phi}_{t}\right)_{t\geq 0}$ defined by
\begin{equation*}
\overline{\Phi}_{t}:[0,\infty)\times E \rightarrow [0,\infty)\times E, \quad \overline{\Phi}_{t}(s,x):=(s+t,\Phi_{s,s+t}(x)), \quad t,s\geq 0, x\in E,
\end{equation*}
is a semi-dynamical system on $[0,\infty)\times E$.

Thus, the results obtained in this work for (autonomous) semi-dynamical systems can be easily reinterpreted for non-autonomous semi-dynamical systems.

\subsection{The extended weak generator of a semi-dynamical system} \label{subsec3.1}

We have the following characterization of those Markovian transition functions that correspond to semi-dynamical systems:
\begin{prop} \label{prop3.5} 
Let $\Ss= (S_t)_{t\geqslant 0}$ be a Markovian transition function on $(E, \cb)$ and $(D, \cd(D))$ be its extended weak generator.
Then the following assertions are equivalent.
\begin{enumerate}
    \item[(i)] $\Ss=(S_t)_{t\geqslant 0}$ is the {transition function of } a  semi-dynamical system on $E$.
    \item[(ii)] The transition function $\Ss=(S_t)_{t\geqslant 0}$ satisfies  $(\ref{3.4})$ and it is multiplicative, that is, for every $f,g\in bp\cb$ and $t>0$ we have $S_t(f g)=(S_tf) (S_tg)$.
    \item[(iii)] $\Ss= (S_t)_{t\geqslant0}$   satisfies  $(\ref{3.4})$, $\cb_e$ and $\cd_e (D)$  are   algebras, $\Ss=(S_t)_{t\geqslant 0}$ is multiplicative on $\cb_e$, and if  $u\in \cd_e(D)$ {then }  $Du^2=2uDu$.
    \item[(iv)] $\Ss= (S_t)_{t\geqslant0}$ satisfies  $(\ref{3.4})$, $\cd_b^c(D) :=\{ u\in b\cd^c_o(D): Du\in b\cb_{oo}\}$ is an  algebra, and if $u\in \cd_b^c(D)$ then  $Du^2=2uDu$.
    \item[(v)] $\Ss= (S_t)_{t\geqslant0}$ satisfies  $(\ref{3.4})$ and there exists an algebra $\ca\subset  \cd_b^c(D)$ which generates $\cb$, $S_t u \in \ca,$ $t> 0$, and $Du^2=2uDu$ for each $u\in \ca$.
\end{enumerate} 
\end{prop}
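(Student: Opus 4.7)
The plan is to establish the cycle $(i)\Leftrightarrow(ii)\Rightarrow(iii)\Rightarrow(iv)\Rightarrow(v)\Rightarrow(ii)$. The equivalence $(i)\Leftrightarrow(ii)$ is essentially contained in Remark~\ref{rem2.1}$(i)$--$(ii)$, so the real work is the four implications among $(ii)$--$(v)$. For $(ii)\Rightarrow(iii)$ I would first check that $\cb_e$ is an algebra: a product of excessive functions is excessive when $\Ss$ is multiplicative (the argument of Remark~\ref{rem2.1}$(iv)$ transfers verbatim), and $\lim_{s\searrow 0} S_s(fg)=fg$ is immediate from $S_s(fg)=(S_sf)(S_sg)$. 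Then for $u,v\in\cd_e(D)$ the factorization
\[
\frac{S_t(uv)-uv}{t} \;=\; S_tu\cdot\frac{S_tv-v}{t} \;+\; v\cdot\frac{S_tu-u}{t}
\]
yields $uv\in\cd_e(D)$ with $D(uv)=uDv+vDu$, after dominating each factor by an excessive majorant; specializing $v=u$ gives $D(u^2)=2uDu$.

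For $(iii)\Rightarrow(iv)$ I observe that every bounded $u\in\cd_o^c(D)$ with $Du\in b\cb_{oo}$ automatically lies in $\cd_e(D)$, since the constant functions are excessive and dominate bounded ones. Hence the algebra structure and derivation identity of $(iii)$ restrict to $\cd_b^c(D)$; the only extra verification is that products stay in $\cd_o^c(D)$, which uses Proposition~\ref{prop3.3}$(viii)$ applied to $u$ and $v$ together with multiplicativity on $\cb_e$ to make $t\mapsto DS_t(uv)=S_t(uDv+vDu)$ continuous. For $(iv)\Rightarrow(v)$ I take $\ca:=\cd_b^c(D)$. The invariance $S_t\ca\subset\ca$ follows from Proposition~\ref{prop3.3}$(iv)$ and $(vii)$ combined with $\|S_tu\|_\infty\leqslant\|u\|_\infty$ and Lemma~\ref{lem3.1}$(i)$. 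For generation, $(\ref{3.5})$ gives $\cb=\sigma(V_\beta V_\alpha(b\cb_{oo}))$; Proposition~\ref{prop3.3}$(vii)$ places $V_\beta V_\alpha(b\cb_{oo})$ inside $\cd_o^c(D)$, while the resolvent identity $D(V_\beta V_\alpha f)=\beta V_\beta V_\alpha f-V_\alpha f$ shows this set consists of bounded functions whose $D$-image is bounded and lies in $\cb_{oo}$, so it sits in $\ca$.

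The decisive step is $(v)\Rightarrow(ii)$, and this is the main obstacle: one must convert the purely infinitesimal identity $D(u^2)=2uDu$ into the semigroup identity $S_t(u^2)=(S_tu)^2$ at every finite $t$. Fix $u\in\ca$ and put $v_t:=S_t(u^2)$, $w_t:=(S_tu)^2$. Both agree at $t=0$ and both stay bounded and inside $\ca$, since $\ca$ is an algebra and $S_t$-invariant. Because $u^2\in\ca\subset\cd_b^c(D)$, Proposition~\ref{prop3.3}$(viii)$ shows that $v_t$ is the unique solution, within the admissible class, of the Cauchy problem $\frac{\d v_t}{\d t}=Dv_t$, $v_0=u^2$. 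For $w_t$, the classical product rule combined with the derivation identity applied to $S_tu\in\ca$ gives
\[
\frac{\d w_t}{\d t} \;=\; 2\,S_tu\cdot D(S_tu) \;=\; D\bigl((S_tu)^2\bigr) \;=\; Dw_t,
\]
and $w_t$ itself has the admissible form required by Proposition~\ref{prop3.3}$(viii)$. Uniqueness therefore forces $v_t=w_t$. Polarizing via $4uv=(u+v)^2-(u-v)^2$ gives $S_t(uv)=(S_tu)(S_tv)$ for all $u,v\in\ca$, and Remark~\ref{rem2.1}$(iii)$ lifts multiplicativity from the generating algebra $\ca$ to all of $bp\cb$. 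The hard part is precisely bridging this infinitesimal-to-global gap; the uniqueness statement of Proposition~\ref{prop3.3}$(viii)$ is the only tool strong enough to do it, which is exactly why the definition of $\cd_b^c(D)$ was set up with both boundedness of $Du$ and continuity of $t\mapsto DS_tu$ built in.
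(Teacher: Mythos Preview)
Your proposal is correct and follows essentially the same route as the paper: the same cycle $(i)\Leftrightarrow(ii)\Rightarrow(iii)\Rightarrow(iv)\Rightarrow(v)\Rightarrow(ii)$, the same factorization of difference quotients for $(ii)\Rightarrow(iii)$, the same observation $\cd_b^c(D)\subset\cd_e(D)$ for $(iii)\Rightarrow(iv)$, the same choice $\ca=\cd_b^c(D)$ together with $(\ref{3.5})$ for $(iv)\Rightarrow(v)$, and the same appeal to the uniqueness in Proposition~\ref{prop3.3}$(viii)$ for $(v)\Rightarrow(ii)$. Your write-up is slightly more explicit in two places the paper leaves implicit (the $S_t$-invariance of $\cd_b^c(D)$, and the polarization step), but there is no genuine difference in strategy.
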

 
\begin{proof}
The implication $(i)\rightarrow (ii)$ is clear;
notice that $(sd4)$ implies that  $(\ref{3.4})$ holds.

\medskip
$(ii)\rightarrow (iii)$.
We show first that 
\begin{equation}\label{2.5}
 \mbox{ if }\  \Ss=(S_t)_{t\geqslant 0} \mbox{ is multiplicative and } v \in \ce_\beta   \mbox{ then } v^2 \in \ce_{2\beta},  
\end{equation}
where $\beta \geqslant 0$ and  $\ce_0:=\ce$.
Indeed, since $\Ss=(S_t)_{t\geqslant 0}$ is multiplicative
we have $e^{-2\beta t} S_t (v^2)= (e^{-\beta t} S_t v)^2\leqslant v^2$, where the inequlity holds because $v\in \ce_\beta$.
Then clearly $\lim_{t\searrow 0}  e^{-2\beta t} S_t (v^2)=  \lim_{t\searrow 0}  (S_t v)^2= v^2$, where the last equality follows from
$ \lim_{t\searrow 0}  S_t v= v.$

As a consequence of $(\ref{2.5})$ we  have:
\begin{equation}\label{2.6}
 \mbox{ if } \Ss= (S_t)_{t\geqslant 0} \mbox{ is multiplicative then }  \cb_e \mbox{ is an algebra, i.e.,  if } f \in \cb_e  \mbox{ then } f^2\in \cb_e.  
\end{equation}
Indeed, if $f\in \cb_e$ and $|f|\leqslant h\in \ce$ then by $(\ref{2.5})$  we get $f^2\leqslant h^2\in \ce$ and 
because $S_s (f^2) = (S_s f)^2$   we also have $\lim_{s\searrow 0} S_s (f^2) = (\lim_{s\searrow 0} S_s f)^2= f^2$.
So, by Lemma  \ref{lem3.1} $(ii)$ we conclude that $f^2$ also belongs to $\cb_e$.

Let now $u\in \cd_e(D)$, $|u|\leqslant h\in \ce$. 
By $(\ref{2.6})$ we get $u^2\in \cb_e$  and
$S_tu^2 - u^2 =  (S_t u - u(x)) (S_t u  + u)$, $t>0$.
We have $|S_tu+ u|\leqslant 2h$ and 
$ \sup_{0<t<t_o} |\frac{S_tu- u}{t }| \leqslant h_\alpha$, with $V_\alpha h_\alpha<\infty$ on $E$, where
$\cv=(V_\alpha)_{\alpha >0}$ is the resolvent of $\Ss$.
Consequently,
$ \sup_{0<t<t_o} |\frac{S_tu^2- u^2 }{t }| \leqslant 2h_\alpha h$ and
we have $V_\alpha (h_\alpha h) = \int_0^\infty  e^{-\alpha s} (S_s h_\alpha) S_s h \leqslant h V_\alpha h_\alpha  <\infty$ on $E$.
Because  
$\lim_{t\searrow 0} S_t u=u$ pointwise on $E$, we conclude that for every $x\in E$ there exists the limit
$\lim_{t\searrow 0}\frac{S_tu^2 (x) - u^2(x)}{t} = \lim_{t\searrow 0} \frac{S_t u(x) - u(x)}{t} \lim_{t\searrow 0} (S_t u (x) + u(x) )= Du(x) 2u(x).$
So,  $u^2\in \cd(D)\cap \cb_e$ and $D u^2 = 2u Du$.
Moreover, $u$ and $Du$ both belong to $\cb_e$, therefore $(\ref{2.6})$ implies that  $Du^2 \in \cb_e$, hence $u^2\in \cd_e(D)$.

\medskip
$(iii)\rightarrow (iv)$ Notice  first that  $\cd^c_b (D)\subset \cd_e(D)$,  because $1\in \ce$.
If  $u\in \cd^c_b(D)$ then by the hypothesis $(iii)$ we have $u^2 \in \cd_e(D)$ and $D u^2 = 2u Du$.
In addition, $u, Du\in b\cb_{oo}$, hence $Du^2$ also belongs to $b\cb_e$ which is an algebra included in $b\cb_{oo}$.
Consequently, $u^2\in \cd_o(D)$.
Since $DS_t (u^2)= 2(S_t u)(DS_t u)$ and the functions  $S_t u$ and $DS_t u$ are continuous in $t$, 
we conclude that $u^2$ also belongs to $\cd^c_b(D)$.  

\medskip
$(iv)\rightarrow (v)$
Assume that $(iv)$ holds, then $\cd^c_b(D)$ is multiplicative and we show that it generates $\cb$.
Indeed, by  Proposition \ref{prop3.3} $(vii)$ we have $V_\beta V_\alpha ({b\cb_o})\subset \cd^c_b(D)$.
From 
$(\ref{3.5})$ we get $\cb= \sigma(V_\beta V_\alpha ({b\cb_o}))\subset \sigma(\cd^c_b (D))$ and thus  $\sigma(b\cd^c_b(D))=\cb$.

\medskip
$(v)\rightarrow (ii)$. 
Let now $u\in \ca$ as in $(v)$.
If we put $v_t:= (S_t u)^2$ then by hypothesis we have $v_t\in \ca$, $t\geqslant 0$, $\sup_{0\leqslant t<\infty} \| v_t \|_\infty\leqslant \| u\|_\infty^2$, and $t\longmapsto Dv_t(x)$ is continuous for each $x\in E$.
Using  Proposition \ref{prop3.3} $(viii)$ we obtain $\frac{\d v_t}{\hspace{-1.5mm}\d t} = 2S_t u \cdot D S_t u= Dv_t,$ $t\geqslant 0$, 
with  $v_0= u^2$. 
 By the uniqueness property of  the equation $(\ref{eq3.3})$   it follows that 
 $(S_t u)^2 = S_tu^2,$ 
 hence
 $(S_t u)(S_t v)= S_t (uv)$ for all $u, v\in \ca $. 
 Applying  \Cref{rem2.1}, (iii), we conclude that 
  $\Ss=(S_t)_{t\geqslant 0}$ is multiplicative and therefor assertion $(ii)$ holds.

  \medskip
 $(ii)\rightarrow (i)$. The proof of this implication is straightforward, however, for the reader convenience we give some details here.
 Let $S_t(x, \cdot)$ be the probability on $E$ induced by the Markovian kernel $S_t$ and $x\in E$, 
 $S_t(x, A):= S_t (1_A) (x)$ for all $A\in \cb$.
 Taking $f=g=1_A$ in the property of $\Ss=(S_t)_{t\geqslant 0}$ to be multiplicative we get $S_t(1_A)= (S_t(1_A))^2$ and therefore the number
 $S_t(x, A)$ should be either $0$ or $1$. 
 Hence $S_t(x, \cdot)$ is a Dirac measure on $E$, concentrated at a point  $\Phi_t(x)\in E$, $S_t(x, \cdot)=\delta_{\Phi_t(x)}$.
 We obtain  $S_t f(x)= f(\Phi_t(x))$ for all $f\in p\cb$, $x\in E$, and $t\geqslant 0$, and it is easy to check now that
 $\Phi=(\Phi_t)_{t\geqslant 0}$ verifies $(sd1)-(sd3)$, while $(sd4)$ follows from $(\ref{eq2.1})$.
 So, $\Phi=(\Phi_t)_{t\geqslant 0}$ is a semi-dynamical system on $E$ and $\Ss=(S_t)_{t\geqslant 0}$ is its transition function.
  \end{proof}

The following result concerns the algebraic structure of the extended generator 
of a semi-dynamical system; its proof is deferred to Appendix (A.3).
\begin{prop}  \label{prop3.6} 
Let $\Ss= (S_t)_{t\geqslant 0}$ be the  transition function of  a  semi-dynamical system  on $(E, \cb)$ and let
 $(\overline D, \cd(\overline D))$ be its extended generator.
If $f\in \cd(\overline D)$  and $\int_0^t  S_s (| f\overline D f|)  \d s < \infty$ for all $t>0$ 
then 
$f^2 \in \cd (\overline D)$ and $\overline D f^2= 2 f \overline D f.$
In particular, $b\cd (\overline D)$ is an algebra.
\end{prop}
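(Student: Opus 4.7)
The approach is to verify the defining identity \eqref{ext-generat} of $\overline{D}$ directly for the candidate pair $(f^2,\; 2f\overline{D}f)$, using the multiplicativity of $\Ss$ (cf.\ \Cref{prop3.5}, $(ii)$) as the key algebraic tool.

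The integrability condition in \eqref{ext-generat} for $g := 2f\overline{D}f$ is precisely the standing hypothesis, so the only remaining task is to establish the integral equation
\[
S_t(f^2)(x) \;=\; f^2(x) + 2\int_0^t S_s(f\overline{D}f)(x) \d s \qquad (t>0,\ x\in E).
\]
Fix $x\in E$ and set $\phi(t):=S_t f(x)$. The definition of $\overline{D}$ gives $\phi(t)=f(x)+\int_0^t S_s\overline{D}f(x) \d s$, so $\phi$ is absolutely continuous on every $[0,T]$, with $\phi'(t)=S_t\overline{D}f(x)$ for a.e.\ $t$, and in particular is bounded on compact time-intervals.

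Next, apply the chain rule for absolutely continuous functions to conclude that $\phi^2$ is absolutely continuous on $[0,T]$ with a.e.\ derivative $2\phi\phi'$. The integrability $\int_0^t|\phi(s)\phi'(s)| \d s<\infty$ needed by the fundamental theorem of calculus is secured through multiplicativity by
\[
|\phi(s)\phi'(s)| \;=\; |S_s f(x)\cdot S_s\overline{D}f(x)| \;=\; |S_s(f\overline{D}f)(x)| \;\leqslant\; S_s(|f\overline{D}f|)(x),
\]
combined with the standing integrability hypothesis. Hence $\phi(t)^2-\phi(0)^2 = 2\int_0^t S_s f(x)\cdot S_s\overline{D}f(x) \d s$. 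Invoking multiplicativity once more---as $S_s(f\overline{D}f)=(S_s f)(S_s\overline{D}f)$ on the right and $(S_t f)^2=S_t(f^2)$ on the left---yields exactly the target identity, so $f^2\in\cd(\overline{D})$ with $\overline{D}(f^2)=2f\overline{D}f$.

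For the final assertion, if $f\in b\cd(\overline{D})$ then $|f\overline{D}f|\leqslant \|f\|_\infty|\overline{D}f|$, so the required integrability reduces to the one already built into $f\in\cd(\overline{D})$; the first part then produces $f^2\in b\cd(\overline{D})$. The polarisation identity $fg = \tfrac14\bigl((f+g)^2-(f-g)^2\bigr)$ gives closure of $b\cd(\overline{D})$ under products and, comparing coefficients, the Leibniz rule $\overline{D}(fg)=f\overline{D}g+g\overline{D}f$ on this subspace. The only delicate step, I expect, is the a.e.\ product rule for $\phi^2$: it rests on $\phi$ being continuous (hence bounded) on each $[0,T]$, which legitimises both the absolute continuity of $\phi^2$ and the pointwise chain rule employed in the derivation.
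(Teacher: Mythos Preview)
Your proof is correct and follows essentially the same strategy as the paper: both reduce to the identity $\phi(t)^2-\phi(0)^2=2\int_0^t\phi(s)\phi'(s)\,\mathrm{d}s$ for $\phi(s)=S_sf(x)$, then invoke multiplicativity to rewrite each side. The only cosmetic difference is that you package this identity as the chain rule for absolutely continuous functions, whereas the paper derives it by hand---substituting $S_sf=f+\int_0^sS_u(\overline{D}f)\,\mathrm{d}u$ into the right-hand side and applying Fubini to the resulting double integral $\int_0^t\int_0^sS_sg\cdot S_ug\,\mathrm{d}u\,\mathrm{d}s$. Your route is slightly cleaner; the paper's is self-contained.
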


\paragraph{Example: The classical case of an Euclidean gradient flow.}
Let ${\bf B}:\mathbb{R}^d \rightarrow \mathbb{R}^d$ be a continuous vector field such that:
\begin{enumerate}
    \item[(B.i)] For each $r>0$ there exists a constant $c(r)$ such that for all $x,y\in\mathbb{R}^d,|x|,|y|\leq r$
    \begin{equation*}
       \langle {\bf B}(x)-{\bf B}(y),x-y\rangle \leq c(r)|x-y|^2 \qquad  (local \; weak \; monotonicity).
    \end{equation*}
    \item[(B.ii)] There exists a constant $c_0$ such that for all $x\in \mathbb{R}^d$
    \begin{equation*}
       \langle {\bf B}(x),x\rangle \leq c_0(1+|x|^2) \qquad  (weak \; coercivity).
    \end{equation*}
\end{enumerate}
Then, by e.g. [Rockner-Wei Liu], Therem 3.1.1 (applied for $\sigma \equiv 0$), for each $x\in \mathbb{R}^d$ there exists a unique solution $(\Phi_t(x))_{t\geq 0}\in C([0,\infty);\mathbb{R}^d)$ to the equation
\begin{equation}\label{eq:flow}
\left\{
\begin{array}{ll}
 \d \Phi_t(x)= {\bf B} (\Phi_t(x))\d t,  & t\geqslant 0,\\[2mm]
\Phi_0(x)=x.
\end{array}
\right.
\end{equation}
$(\Phi_t)_{t\geq 0}$ is a semi-dynamical system as considered in Section 2, which can be regarded as a (deterministic) right process with transition function $(S_t)_{t\geq 0}$, 
$$
S_tf(x)=f(\Phi_t(x)), \quad t\geq 0,x\in\mathbb{R}^d,f\in b\mathcal{B}(\mathbb{R}^d).
$$
Note that if $(D, \mathcal{D}(D))$ denotes the { weak} generator of the continuous flow  $\Phi= (\Phi_t)_{t\geqslant 0}$, then it is clear that
$$
Dv= {\bf B} \! \cdot \! \nabla v \, \mbox{ for all }  v\in C^1_b(\mathbb{R}^m).
$$

\subsection{Stopped  continuous flows} \label{subsec3.2}

In this subsection (more precisely, in Proposition \ref{prop3.8} below) we apply to continuous flows the classical technique of stopping a Markov process at its first entry time in a given set. 
This stopping technique has been used in \cite{BeIoLu-St21}, Remark 3.4, in studying   stochastic fragmentation processes for particles with spatial position on a surface.

Let  $\Phi=(\Phi_t)_{t\geqslant 0}$ be continuous flow on a Lusin topological space $E$
and let $\co$ be an open susbset of $E$.
Let $T$ be {\it the first entry time} in $\co^c=E\setminus \co$,
\begin{equation*}
T(x)= \inf \{ t\geqslant 0: \Phi_t(x) \in \co^c\}. 
\end{equation*}
The following properties are immediate:
\begin{enumerate}
    \item $T$ is a {\it terminal time}, that is, the mapping $E\ni x \longmapsto T(x)$ is  $\cb(E)$-measurable and
    $$
    t + T\circ \theta_t=T \mbox{ on } [t< T], 
    $$
    or equivalently, 
    $t+ T(\Phi_t (x))= T(x) \mbox{ if } t< T(x) \mbox{ for all } x\in E$.
    \item If $x \in \oco$ then $\Phi_{T(x)}  (x)\in \partial \co$.
    \item If $x \in {\oco}^{\, c}$ then $T(x)=0$, so, $\Phi_{T(x)} (x)=x$.
    \item We have $\Phi_T(x) (x)\in \co^c$ for every $x\in E$.
\end{enumerate}
For each $t\geqslant 0$ define the map 
$\Phi^o_t: E \rightarrow E$ as
\begin{equation*}
   \Phi^o_t (x):=
   \begin{cases}
        \Phi_t(x),\quad &t< T(x)\\
        \Phi_{T(x)}(x), \quad &t\geqslant T(x)
    \end{cases}
    , x\in E.
\end{equation*}

The announced result of this subsection is the following collection of statements, whose proofs are presented in Appendix (A.4).
\begin{prop}  \label{prop3.8} 
Then the following assertions hold.
\begin{enumerate}
    \item[(i)] The family $\Phi^o := (\Phi^o_t)_{t\geqslant 0}$ is a  continuous flow  on $E$ and it is called {\rm the stopped  flow} w.r.t. $T$. We have   $\oph_t(x)= \Phi_t(x)$  if $t< T(x)$ and $\Phi^o_t (x)=x$ for every $x\in  \co^c$ and $t\geqslant 0$.
    \item[(ii)] Let  $(D, \mathcal{D}(D))$  (resp.   $({D^o}, \mathcal{D}({D^o}))$) be  the extended { weak} generator of the continuous flow  $\Phi$  (resp. of the continuous flow  $\Phi^o$) on $E$. 
    We have ${D^o} u=0$ on $\co^c$ for all $ u \in \cd({D^o})$ and if in addition  $u\in \cd(D)$ then $D^o u= {D}u$ on $\co$.
    \item[(iii)] The set $\overline{\co}$ is  {\rm absorbing} for $\Phi^o := (\Phi^o_t)_{t\geqslant 0}$, that is, if $x\in \overline\co$ then  $\Phi^o_t (x) \in \overline\co$ for all $t\geqslant 0$.
    \item[(iv)] Define {\rm the restriction}  $\Phi^{\oco} = (\Phi^{\oco}_t)_{t\geqslant 0}$ of  $\Phi$ to $\oco$ as $\Phi^{\oco} (x) := \Phi^o_t(x)$ for all $x\in \oco$ and $t\geqslant 0$. 
    Then $\Phi^{\oco}$ is a continuous flow on $\oco$.
\end{enumerate} 
\end{prop}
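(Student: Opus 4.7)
The plan is to establish the four assertions in order, since (ii)--(iv) will all reduce to the structural properties of $\Phi^o$ obtained in (i). For (i), I would first rewrite $\Phi^o_t(x) = \Phi_{t \wedge T(x)}(x)$; since $T$ is $\cb(E)$-measurable and $(t,x) \mapsto \Phi_t(x)$ is jointly measurable (by the remark following the definition of semi-dynamical systems), this immediately gives (sd3), while (sd2) is clear from $\Phi_0 = \mathrm{Id}$. Continuity of $t \mapsto \Phi^o_t(x)$ follows by splitting at $T(x)$: on $[0, T(x))$ it agrees with the continuous map $t \mapsto \Phi_t(x)$, on $(T(x), \infty)$ it is constant equal to $\Phi_{T(x)}(x)$, and the two branches match at $T(x)$ by continuity of $\Phi$. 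The semigroup property $\Phi^o_{t+s}(x) = \Phi^o_t(\Phi^o_s(x))$ is the heart of the argument: I would split into the cases $s \geqslant T(x)$ (both sides equal $\Phi_{T(x)}(x)$, using that $\Phi^o_s(x) = \Phi_{T(x)}(x) \in \co^c$ has first entry time zero) and $s < T(x)$ (using the terminal time identity $t + T(\Phi_t(x)) = T(x)$ on $\{t < T(x)\}$ to compute $T(\Phi^o_s(x)) = T(x) - s$, then splitting further on whether $t < T(x) - s$ to get $\Phi_{t+s}(x)$ or $\Phi_{T(x)}(x)$, respectively). The final claim $\Phi^o_t(x) = x$ on $\co^c$ follows because $T(x) = 0$ there.

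For (ii), the key observation is that for $x \in \co^c$ we have $\Phi^o_t(x) = x$ for all $t \geqslant 0$ by (i), so $S^o_t u(x) = u(x)$ and the difference quotient vanishes identically; hence $D^o u(x) = 0$. For $x \in \co$, openness of $\co$ together with continuity of $s \mapsto \Phi_s(x)$ at $0$ forces $T(x) > 0$, so on a right neighborhood of $0$ the two flows coincide at $x$, which means the limits defining $D^o u(x)$ and $D u(x)$ are computed from identical difference quotients. Assertion (iii) is a direct case analysis for $x \in \oco$: either $x \in \co^c \cap \oco = \partial\co$ and then $\Phi^o_t(x) = x \in \oco$; or $x \in \co$ and then $\Phi^o_t(x) \in \co \subset \oco$ when $t < T(x)$, while for $t \geqslant T(x)$ (which requires $T(x) < \infty$) we have $\Phi^o_t(x) = \Phi_{T(x)}(x) \in \partial \co \subset \oco$ by property 2 of $T$. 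Assertion (iv) is then immediate: (iii) shows that $\oco$ is stable under $\Phi^o$, so the restriction is well defined as a family of maps $\oco \to \oco$, and the flow axioms (sd1)--(sd3) together with continuity in $t$ are inherited directly from (i) on the Lusin measurable subspace $\oco$.

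The main obstacle is the case analysis for the semigroup property in (i); this is the only place where the interplay between the terminal time identity and the piecewise definition of $\Phi^o$ requires genuine care. All remaining steps are either formal bookkeeping or direct unpacking of the definition of the extended weak generator via its pointwise difference-quotient characterization.
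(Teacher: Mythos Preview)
Your proposal is correct and follows essentially the same approach as the paper. The paper dismisses (i) as ``a straightforward verification'' without details, whereas you spell out the case analysis for the semigroup property using the terminal-time identity and property (4) of $T$; your arguments for (ii)--(iv) match the paper's almost verbatim, including the use of $T(x)>0$ on $\co$ (via continuity and openness) for (ii) and the same two-case split on $\partial\co$ versus $\co$ for (iii).
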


\section{Continuous flow driving a Markov process}  \label{sec4} 

Let $(L, \cd(L))$ and $(D, \cd(D))$ be two extended weak generators on $E$. 
Define
\begin{equation*}
    \cd(DL):= \{ u\in \cd(D)\cap \cd(L): Lu\in \cd(D) \mbox{ and } DLu \in \cb^0(\Tt) \},
\end{equation*}
and $\cd(LD)$ analogously.

\vspace{1mm}

We can present now the  second main result of this paper.

\begin{thm} \label{thm4.1} 
Let $\Tt=(T_t)_{t\geqslant 0}$ be the transition function of a right (resp. Hunt) Markov process 
$X=(\Omega, \cf, \cf_t, X_t, \Pp^x)$ with state space $E$ and extended weak generator $(L, \cd(L))$.
Assume that
there exists a multiplicative set  $\cC_1\subset b\cC(E)$ which generates $\cb(E)$
such $T_t(\cC_1)\subset C(E)$
 for all $t >0$.
Let $\Phi=(\Phi_t)_{t\geqslant 0}$  be a  right continuous flow on $E$
such that the mapping $(x, t)\longmapsto \Phi_t (x)$ is continuous on $E \times [0, \infty)$, with transition function 
$\Ss=(S_t)_{t\geqslant 0}$ and extended weak generator $(D, \cd(D))$.
Suppose in addition that  {\rm $L$ and $D$ commute} in the sense that
\begin{equation*}
    \cd(DL) = \cd(LD)=: \cd_o \quad \mbox{ and } \quad DL= LD \mbox{ on }  \cd_o.
\end{equation*}
Furthermore, set
\begin{equation*}
    {X}_t^\Phi : = \Phi_t (X_t), \, t\geqslant 0.
\end{equation*}
Then the following assertions hold.
\begin{enumerate}
    \item[(i)] ${X}^\Phi :=(\Omega, \cf, \cf_t, {X}^\Phi_t, \Pp^x)$ is a right (resp. Hunt) Markov process with state space $E$ and the transition ${\Tt}^\Phi:= ({T}^\Phi_t)_{t\geqslant 0}$ defined as  ${T}_t^\Phi :=S_t T_t$ for all $t\geqslant 0$.
    \item[(ii)] Let $\cd_c:= U_\alpha V_\beta(bC(E)),$    $\alpha, \beta >0$. 
    Then $\cd_o\subset \cd_o(L)\cap \cd_o (D) \cap  \cd_o({L}^\Phi)$, $\cd_c \subset \cd_o^c(L)\cap \cd_o(D) \cap \cd (L^\Phi)$ and  
    $$
    {L}^\Phi= L+ D \mbox{ on } \cd_c.
    $$
\end{enumerate}
\end{thm}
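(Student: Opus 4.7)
The plan is to propagate the commutativity of the generators $L$ and $D$ to the level of the semigroups, then deduce the Markov structure of $X^\Phi$, and finally identify $L^\Phi$ on $\cd_c$. A first and crucial step is to upgrade the hypothesis $LD=DL$ on $\cd_o$ to the commutation $T_tS_s=S_sT_t$ on all of $b\cb(E)$. For an element of the form $u=U_\alpha V_\beta g$ with $g\in\cC_1$, Proposition 3.3(i) combined with the hypothesis $T_t(\cC_1)\subset C(E)$ and the Ray-type regularity of $\Ss$ from Theorem 2.2 places $u$ in $\cd^c_b(L)\cap\cd^c_b(D)\cap\cd_o$. A standard Cauchy-problem uniqueness argument, based on Proposition 3.3(viii) and the identity $LD=DL$ on $\cd_o$, then shows that $\varphi(s,t):=T_tS_s u - S_sT_tu$ vanishes identically. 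Since $\cC_1$ is multiplicative and generates $\cb(E)$ and $\alpha U_\alpha,\beta V_\beta\to I$ pointwise, a monotone class argument extends the commutation to $b\cb(E)$. Consequently $T^\Phi_t:=S_tT_t=T_tS_t$ is well defined and
\[T^\Phi_sT^\Phi_t=S_sT_sS_tT_t=S_sS_tT_sT_t=S_{s+t}T_{s+t}=T^\Phi_{s+t},\]
and $U_\alpha V_\beta=V_\beta U_\alpha$.

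For part (i), the Markov property is then immediate:
\[\Ee^x[f(X^\Phi_{s+t})\mid\cf_s]=T_t(S_{s+t}f)(X_s)=T_tS_sS_tf(X_s)=S_sT_tS_tf(X_s)=T^\Phi_tf(\Phi_s(X_s))=T^\Phi_tf(X^\Phi_s),\]
by the Markov property of $X$, the semigroup identity for $\Ss$, and commutativity. Right continuity of $t\mapsto X^\Phi_t=\Phi_t(X_t)$ is immediate from the joint continuity of $(x,t)\mapsto\Phi_t(x)$ combined with the right continuity of $X$; in the Hunt case, quasi-left continuity transfers from $X$ to $X^\Phi$ via the same joint continuity along predictable stopping times $T_n\nearrow T$, since $\Phi_{T_n}(X_{T_n})\to\Phi_T(X_T)$.

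For part (ii), fix $u=U_\alpha V_\beta f=V_\beta U_\alpha f$ with $f\in bC(E)$ (the equality by the commutativity of the resolvents established in the first step). Proposition 3.3(i) gives $u\in\cd(L)\cap\cd(D)$, $Lu=\alpha u-V_\beta f$, $Du=\beta u-U_\alpha f$, both bounded and hence in $\cb_{oo}$ by Lemma 3.1(iii), and the hypothesis $T_t(\cC_1)\subset C(E)$ places $u$ in $\cd^c_o(L)\cap\cd_o(D)$. To compute $L^\Phi u$, I would write
\[\frac{T^\Phi_t u-u}{t}=T_t\!\left(\frac{S_tu-u}{t}\right)+\frac{T_tu-u}{t}.\]
The second term converges pointwise to $Lu$ by definition. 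For the first, the estimate
\[\left|T_t\!\left(\tfrac{S_tu-u}{t}\right)-Du\right|\leqslant T_t\!\left|\tfrac{S_tu-u}{t}-Du\right|+|T_tDu-Du|,\]
together with the $\cb_{oo}$-dominant built into the definition of $\cd(D)$ (allowing interchange of $T_t$ with the pointwise limit) and $Du\in\cb^0(\Tt)$ (so $T_tDu\to Du$ pointwise), yields convergence to $Du$. The same dominant controls $\sup_{0<t<t_o}|(T^\Phi_tu-u)/t|$ in $\cb_{oo}$, placing $u\in\cd(L^\Phi)$ with $L^\Phi u=Lu+Du$. The inclusion $\cd_o\subset\cd_o(L)\cap\cd_o(D)\cap\cd_o(L^\Phi)$ follows from the definitions and the same calculation.

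The main obstacle is the limit passage in the first summand above, since the sub-Markovian kernel $T_t$ is applied to a function that itself depends on $t$. The $\cb_{oo}$-machinery of Section 3 is designed precisely for such interchanges, but it must be coupled with the semigroup commutation $T_tS_s=S_sT_t$ from the first step and the weak Feller hypothesis $T_t(\cC_1)\subset C(E)$ to ensure that all auxiliary functions lie in classes where $T_tg\to g$ pointwise and dominated convergence are available. A secondary subtlety is the very first step itself: the assumption $LD=DL$ only on $\cd_o$ is upgraded to semigroup commutativity only indirectly, via the density and stability provided by $\cC_1$ and the regularity of $u=U_\alpha V_\beta g$.
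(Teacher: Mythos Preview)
Your first step---upgrading the generator identity $LD=DL$ on $\cd_o$ to the semigroup commutation $T_tS_s=S_sT_t$---is a genuine addition: the paper's proof simply \emph{uses} $T_tV_\beta=V_\beta T_t$ and $T_sS_{t+s}=S_sT_sS_t$ without deriving them from the stated hypothesis, so you are filling a gap the authors left implicit. Your treatment of part~(i) is likewise essentially the paper's argument, only with the commutation made explicit.

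The real problem is in part~(ii), at the step where you claim
\[
T_t\!\left|\frac{S_tu-u}{t}-Du\right|\longrightarrow 0
\]
by ``the $\cb_{oo}$-dominant built into the definition of $\cd(D)$ (allowing interchange of $T_t$ with the pointwise limit).'' This does not work as stated. First, the dominants in the definition of $\cd(D)$ satisfy $V_\alpha h_\alpha<\infty$, which controls the resolvent of $\Ss$, not of $\Tt$; they give you no grip on $T_t$. Second, and more seriously, even knowing that $g_t:=\tfrac{S_tu-u}{t}-Du$ is uniformly bounded and $g_t\to 0$ pointwise is not enough to conclude $T_tg_t\to 0$: both the kernel and the function depend on the same parameter $t$, so this is a diagonal limit, not a dominated-convergence situation. (Think of translation $T_tf(x)=f(x+t)$ on $\R$ acting on $g_t=1_{[t,2t]}$: then $g_t\to 0$ pointwise and boundedly, yet $T_tg_t(0)=1$ for all $t>0$.)

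The paper avoids this trap by using the \emph{opposite} decomposition
\[
\frac{T_t^\Phi u-u}{t}=S_t\!\left(\frac{T_tu-u}{t}\right)+\frac{S_tu-u}{t},
\]
reducing (via Proposition~3.3(v)) to bounded potential kernels $U,V$, and then exploiting the explicit identity $T_tu-u=-V\bigl(\int_0^t T_sf\,ds\bigr)$ for $u=UVf$. After commuting $S_t$ through $V$ (which is automatic since $V=\int_0^\infty S_{s'}\,ds'$), the troublesome limit becomes a $V$-integral of a bounded function tending pointwise to zero, and \emph{now} dominated convergence applies---the outer $V$-integral is what absorbs the limit. Your decomposition can be rescued by a symmetric explicit computation (write $S_tu-u=-\int_0^tS_sU_\alpha f\,ds$, commute, and use $T_tU_\alpha f-U_\alpha f=O(t)$), but the justification you gave is not the right one.
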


\begin{proof}
$(i)$ We check first  that  ${X}^\Phi$   is a  (simple) Markov process with
${\Tt}^\Phi$ as transition function.
If  $f\in bp\cb$, $\mu$ is a probability on $E$, and $s, t\geqslant  0$ then by the Markov property of $X$ we obtain
$\Ee^\mu [ f (X^\Phi_{t+s} | \cf_t ] =  T_s( f(\Phi_{t+s} ) )(X_t)= T_sS_{t+s}f (X_t)= S_s T_s S_tf(X_t)= T_s^\Phi f(X^\Phi_t)$.
We have also
$T_{t-s}^\Phi f(X_s^\phi)= S_sT_{t-s}S_{t-s} f (X_s)= T_{t-s} S_t f (X_s)$ if $s<t$. 
It follows that for all $t\geqslant 0$ 
$ [s\longmapsto T_{t-s}^\Phi f(X_s^\phi)1_{[0, t)}$ is not right continuous$]$$=
[s\longmapsto T_{t-s}(S_t f) (X_s)1_{[0, t)}$ is not right continuous$]$ and by Corollary (7.9) from \cite{Sh88} we conclude that
${X}^\Phi$ is a right process.

$(ii)$ 
 Observe  that Corollary \ref{cor2.2} implies that $bC(E)\subset \cb_e(\Ss)\cap \cb_e(\Tt)\cap\cb_e(\Tt^\phi).$
If  $f\in bC(E)$,  because $T_t$ and $V_\beta$ commute, by dominate convergence we get 
$\lim_{t\searrow 0} S_t U_\alpha f= U_\alpha (\lim_{t\searrow 0} S_t  f)= f$.
Therefore, by Lemma \ref{lem3.1} $(ii)$ we deduce that $U_\alpha f \in \cb_{oo}(\Ss)$ and
consequently,  if $u=U_\alpha V_\beta f$ then $u\in \cd_o(D)$.
Analogously, $u$ belongs to $\cd_o (L)$ too.
In addition,
$LT_t u= \alpha T_t u- V_\beta T_t  f$,  $DS_t u= \beta S_t u- U_\beta S_t  f$
 and so, the functions $LT_t u(x)$ and $DS_t u(x)$, $x\in E$,   are continuous in $t$, hence $u\in \cd_o^c(L)\cap \cd_o^c(D)$.

Because $\lim_{t\searrow 0} \frac{T_t u - u}{t}= Lu$ if and only if  $ \lim_{t\searrow 0}  \frac{e^{-\alpha t} T_t u - u}{t}= Lu-\alpha u$, 
we may suppose that the potential kernels $U$ and $V$ are bounded and that 
$u=UVf$, hence $U|f|$ and $V|f|$ are bounded functions.
We have
$\frac{T_t^\Phi u- u}{t}  = S_t (\frac{T_t u - u}{t}) +  \frac{S_t u-u}{t}$ 
and so, to show that  $ u\in \cd (L^\Phi)$  and ${L}^\Phi u = Lu+ Du$,  it is sufficient to prove that
$\lim_{t\searrow 0} S_t( \frac{T_t u - u}{t})= -Vf  \ \mbox{ pointwise on } E.$
We have $T_t u-u = - V( \int_0^t T_s f \d s)$, 
$ S_t (\frac{T_t u - u}{t})=  -V  (S_t \frac 1 t \int_0^t T_s f \d s)=
-Vf - V (S_t \frac 1 t \int_0^t T_s g \d s -f).
$
Therefore, it remains to show that $\lim_{t\searrow 0} V (S_t \frac 1 t \int_0^t T_s f \d s -f)=0 $ pointwise on $E$.
We have $
V (S_t \frac 1 t \int_0^t T_s f \d s -f)= \int_t^\infty S_{s'} (\frac 1 t \int_0^t (T_s f-f) \d s )\d s' - \int_0^t  S_{s'} f \d s'. 
$
Since $f\in bC(E)$, the second term from the right hand side of the last equality tends to zero when ${t\searrow 0}$.
For the first term we have the estimation 
$ \left| \int_t^\infty S_{s'} (\frac 1 t \int_0^t (T_s f-f) \d s )\d s' \right| \leqslant 
V(\frac 1 t \int_0^t |T_s f-f| ds)
$
and because  $\lim_{s \searrow 0} T_s f = f$ pointwise on $E$, the first term also vanishes when ${t\searrow 0}$.
\end{proof}


\subsection{Right continuous flow driving its subordinate process} \label{subesec4.1}

Let $\Phi=(\Phi_t)_{t\geqslant 0}$  be a  right continuous flow on $E$, with transition function  $\Ss=(S_t)_{t\geqslant 0}$. 
 Let further $\mu=(\mu_t)_{t\geqslant 0}$ be a convolution semigroup on $\mathbb{R}_+$
 and consider $\Ss^\mu = (S_t^\mu)_{t\geqslant 0}$, 
the {subordinate of $(S_t)_{t\geqslant 0}$  in the sense of Bochner } w.r.t. $\mu$, 
defined as
$S^\mu_t f := \int_0^\infty \!\!\!  S_s f \! \! \ \mu_t(\! \d s),$   $t\geqslant 0,$ $f \in p\cb(E)$;
for details see e.g. \cite{SchSoVo12} and also \cite{Lu14}.
In particular,  the subordinate process $Y^\xi=(Y^\xi_t)_{t\geqslant 0}$  is defined as 
$$
Y^\xi_t(x, \omega) := \Phi_{\xi_t(\omega)}(x), \,\,\,  {t\geqslant 0}, (x, \omega) \in E\times \Omega
$$
and it turns out that $Y^\xi=(Y^\xi_t)_{t\geqslant 0}$ 
is a right Markov process with state space $E$, path space  $E\times \Omega'$, and transition function $\Ss^\mu =(S_t^\mu)_{t\geqslant 0}$,
where $\Omega'$ is the path space of the {\it subordinator}   $({\xi_t})_{t\geqslant 0}$,  
the positive real-valued  stationary stochastic process 
with path space $\Omega'$, with independent nonnegative increments  induced by  $\mu=(\mu_t)_{t\geqslant 0}$.
So, $Y^\xi$ is obtained by
introducing jumps in the evolution of the given right continuous flow $\Phi$, by means of the subordinator 
induced by  $\mu=(\mu_t)_{t\geqslant 0}$.

We state now a consequence of Theorem \ref{thm4.1} involving the right continuous flow $\Phi$ and the subordinate process $Y^\xi$. 

\begin{cor} \label{cor4.4} 

Let $\Ss=(S_t)_{t\geqslant 0}$ be the transition function of   a right continuous flow $\Phi=(\Phi_t)_{t\geqslant 0}$ on $E$.
Let $({\xi_t})_{t\geqslant 0}$ be  a positive real-valued  stationary stochastic process 
with independent nonnegative increments induced by  a  convolution semigroup $\mu=(\mu_t)_{t\geq 0}$ on $\mathbb{R}_+$.
Further, define
$$
{Y}_t^\Phi : = \Phi_{t+\xi_t}, \, t\geqslant 0.
$$ 
Then the following assertions hold.
\begin{enumerate}
    \item[(i)]  ${Y}^\Phi :=(E \times \Omega, {Y}^\Phi_t )$ is a right Markov process with state space $E$ and the transition function  ${\Tt}^\Phi:= ({T}^\Phi_t)_{t\geqslant 0}$ defined as  ${T}_t^\Phi :=S_t S^\mu_t$ for all $t\geqslant 0$.
    \item[(ii)] Let $(D, \cd(D))$,  $(D^\mu , \cd(D^\mu ))$,  and $({L}^\Phi, \cd({L}^\Phi))$  be the extended weak generators of  $\Ss$,   $\Ss^\mu$, and respectively ${\Tt}^\Phi$.  
    Let further   $\cd_o:= V^\mu_\alpha V_\beta(bC(E)),$    $\alpha, \beta >0$, where 
    $\cv=(V_\alpha)_{\alpha >0}$    (resp.  $\cv^\mu =(V_\alpha^\mu)_{\alpha >0}$)  is the resolvent of $\Ss$ (resp. the resolvent of $\Ss^\mu$).
    Then 
    $\cd_o \subset \cd_o^c(D^\mu)\cap \cd_o(D) \cap \cd (L^\Phi)$  and  
    $$
    {L}^\Phi= D^\mu + D \mbox{ on } \cd_o.
    $$
\end{enumerate}

\end{cor}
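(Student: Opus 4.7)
The plan is to deduce the corollary as a direct application of Theorem~\ref{thm4.1}, taking the role of the right Markov process $X$ to be the subordinate process $Y^\xi$ while keeping $\Phi$ itself as the driving continuous flow. The first step is to rewrite $Y^\Phi$ in the requisite product form. Using the semigroup property of $\Phi$ together with the pathwise identity $\Phi_{t+\xi_t}=\Phi_t\circ\Phi_{\xi_t}$ one obtains
\begin{equation*}
Y^\Phi_t \;=\; \Phi_t(\Phi_{\xi_t})\;=\; \Phi_t(Y^\xi_t),\qquad t\geqslant 0,
\end{equation*}
so $Y^\Phi$ is precisely the $\Phi$-driven process associated with $X:=Y^\xi$. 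As recalled before the statement, $Y^\xi$ is a right Markov process with transition function $\Ss^\mu$ and resolvent $\cv^\mu$, and so its extended weak generator is $(D^\mu,\cd(D^\mu))$. Under the translation $L\leftrightarrow D^\mu$, $U_\alpha\leftrightarrow V^\mu_\alpha$, the domain $\cd_c=U_\alpha V_\beta(bC(E))$ of Theorem~\ref{thm4.1}(ii) is exactly our $\cd_o=V^\mu_\alpha V_\beta(bC(E))$.

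Next I would verify the two non-trivial hypotheses of Theorem~\ref{thm4.1}. For the Feller-type continuity requirement, by Theorem~\ref{thm2.2} we may endow $E$ with a Lusin topology in which $\Phi$ is a continuous flow and every map $x\longmapsto\Phi_t(x)$ is continuous; hence $S_t(bC(E))\subset bC(E)$, and since $S^\mu_t f=\int_0^\infty S_s f\,\mu_t(\d s)$ a dominated convergence argument yields $S^\mu_t(bC(E))\subset bC(E)$ as well. It is then legitimate to take $\cC_1:=bC(E)$, which is multiplicative and generates $\cb(E)$. For the commutativity of the extended weak generators, the starting point is the Fubini identity
\begin{equation*}
S_t S^\mu_s f \;=\; \int_0^\infty S_t S_{s'} f\,\mu_s(\d s') \;=\; \int_0^\infty S_{s'} S_t f\,\mu_s(\d s') \;=\; S^\mu_s S_t f,
\end{equation*}
which expresses that the semigroups $\Ss$ and $\Ss^\mu$ commute; integrating in $s,t$ against exponentials transfers this to the resolvents $V_\beta$ and $V^\mu_\alpha$. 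Given $u=V^\mu_\alpha V_\beta f\in \cd_o$ with $f\in bC(E)$, Proposition~\ref{prop3.3}(iv) together with the resolvent equation lets us identify $Du$ (resp.\ $D^\mu u$) as, up to affine terms in $u$, another function of the form $V^\mu_\alpha V_\beta(\cdot)$, so that $\cd_o\subset \cd(D^\mu D)\cap\cd(DD^\mu)$ and $DD^\mu=D^\mu D$ on $\cd_o$.

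With both hypotheses verified, assertion~(i) is immediate from Theorem~\ref{thm4.1}(i): $Y^\Phi$ is a right Markov process with transition function $T^\Phi_t=S_t S^\mu_t$. Assertion~(ii) follows from Theorem~\ref{thm4.1}(ii) applied with $\cd_c=\cd_o$, yielding $\cd_o\subset \cd_o^c(D^\mu)\cap\cd_o(D)\cap\cd(L^\Phi)$ together with the identity $L^\Phi=D^\mu+D$ on $\cd_o$. The main technical obstacle I expect is not the overall structure, which is a clean specialisation of Theorem~\ref{thm4.1}, but rather propagating the commutativity statement from the level of the semigroups to the \emph{extended weak} generators: one must track the domination and pointwise convergence conditions built into $\cb_o$ and $\cb_{oo}$ through the Bochner integral defining $\Ss^\mu$, so as to establish the equality $\cd(D^\mu D)\cap\cd_o = \cd(DD^\mu)\cap\cd_o$ required before invoking Theorem~\ref{thm4.1}.
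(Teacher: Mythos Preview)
Your proposal is correct and follows essentially the same route as the paper: both recognise $Y^\Phi_t=\Phi_t(Y^\xi_t)$ and then invoke Theorem~\ref{thm4.1} with $X:=Y^\xi$, using the commutation identity $S_{t'}S^\mu_t=S^\mu_t S_{t'}=\int_0^\infty S_{s+t'}\,\mu_t(\d s)$. The paper's proof is considerably more telegraphic than yours---it records only the semigroup commutation and the right continuity of $t\mapsto\Phi_{t+\xi_t(\omega)}(x)$, then cites Theorem~\ref{thm4.1}(ii) without spelling out the Feller-type hypothesis or the passage from semigroup commutation to generator commutation; your treatment via Theorem~\ref{thm2.2} and the explicit resolvent manipulations fills in exactly those steps the paper leaves to the reader.
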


\begin{proof}
We apply Theorem \ref{thm4.1} for  $X:= {Y}^\Phi $.
We clearly have $X^\Phi_t = \Phi_t(Y^{\xi}_t)= \Phi_t (\Phi_{\xi_t})=\Phi_{t+\xi_t}$ and 
observe  that the paths $t\longmapsto \Phi_{t+\xi_t(\omega)}(x)  $ are right continuous, without assuming that the right continuous flow $\Phi$ is continuous.
For all $t, t' >0$ we have
$S_{t'}  S^\mu_{t} = S^\mu_{t}S_{t'} = \int_0^\infty S_{s+t'} \mu_t(\! \d s).$

Assertion $(ii)$ follows from Theorem \ref{thm4.1} $(ii)$.
\end{proof}


\subsection{Continuous flow driving a superprocess} \label{subesec4.2}

Let $\Psi:[0,\infty)\rightarrow {\R}$ be a  branching mechanism,
$$
\Psi(\lambda)=-b\lambda-c\lambda^{2}+\int^{\infty}_{0}(1-e^{-\lambda
s}-\lambda s) N(\! \d s), 
$$
where $b, c\in \R,$  $c \geqslant 0$,  
and  $N$ is a measure on $(0, \infty)$  such that $N(u\land u^2)< \infty$.
Consider  the superpocess $\widehat{X^0}$ on   
the set $M(E)$ of all positive finite measures on $E$, 
having the branching mechanism
$\Psi$ and having no spatial motion.  
According to \cite{BeVr21} the superprocess $\widehat{X^0}$ is called {\it pure branching}.
For details on the measure-valued branching processes see   \cite{Da93}, \cite{Dy02}, \cite {LeG99} and also \cite{Fi88}, \cite{BaBe16}, and \cite{Be11}.

Let further $\Phi$ be a continuous flow  on $E$ and
consider the superprocess   $\widehat X$ on $M(E)$,  
having the spatial motion $\Phi$ and the branching mechanism $\Psi$. 
By $\Phi$ we also denote the continuous flow on $M(E)$ (endowed with the weak topology) canonically induced by the given flow $\Phi$ on $E$.

It turns out  that one can apply Theorem \ref{thm4.1} on $M(E)$ for $\widehat{X^0}$ instead of $X$ and the flow $\Phi$ on $M(E)$.
We get the following representation of the superprocess $\widehat X$ by means of the pure branching superprocess 
$\widehat{X^0}$:
$$
\widehat{X}_t  = \Phi_t (\widehat{X^0_t})  \mbox{ for all } t\geqslant 0,
$$
where the equality is in the distribution sense; see \cite{BeVr21}.
A similar result holds for non-local branching processes (in the sense of  \cite{BeLu16} and \cite{BeLu-StVr20}) on the set of all finite
configurations of the state space of the spatial motion; 
see also \cite{BeLu-StTeod24} for an associated nonlinear Dirichlet problem.

\section{Multiplicative $L^p$-semigroups and continuous flows} \label{sec5} 

Let $\Phi=(\Phi_t)_{t\geqslant 0}$  be  a  semi-dynamical system  with state space $(E,\mathcal{B})$, 
$\Ss=(S_t)_{t\geqslant 0}$ its transition function,
and   $\mathcal{U}=(U_\alpha)_{\alpha > 0} $ be   the associated resolvent of kernels.
Let further $m$ be a positive $\sigma$-finite measure on $E$ which 
{\it subinvariant} for $\Ss$, that is,
\begin{equation*}
    m\circ S_t\leqslant  m \mbox{ for all } t>0,
\end{equation*}
and fix $p\in [1, \infty)$.
Then each kernel $S_t$, $t\geqslant 0$, induces a contraction on $L^p(E, m)$ which is {\it Markovian}, 
that is, if  $f\in L^p(E, m)$,  $0\leqslant f\leqslant 1$ then  $0 \leqslant S_t f \leqslant 1$ and there exists a sequence $(f_n)_n\subset L^p(E, m)$, $f_n\leqslant 1$ for all $n$, such that the sequence $(S_t f_n)_n$ is increasing $m$-a.e. to the constant function $1$.
It turns out that  {\it 
 \begin{equation}\label{5.1}
 \mbox{\it  the transition function  $\Ss$ 
 of a semi-dynamical system becomes a $C_0$-semigroup}
 \end{equation}
 
 \vspace{-2mm}
 
 \noindent
of Markovian contractions on $L^p(E, m)$ which  in addition is {\rm multiplicative on $L^p(E, m)$}, i.e.,
$$
S_t (fg) = (S_t f)(S_t g) \  \mbox{ for all } \ f, g \in L^\infty (E, m)\cap L^p(E, m)\  \mbox{ and }\  t\geqslant 0.
$$
}

In this framework, Theorem \ref{thm4.1} has a natural correspondent which goes as follows:
 
\begin{prop} \label{prop5.1} 
Let $\Tt=(T_t)_{t\geqslant 0}$ be the transition function of a right Markov process 
$X=(\Omega, \cf, \cf_t, X_t, \Pp^x)$ 
with state space $E$ and  
$\Phi=(\Phi_t)_{t\geqslant 0}$  a  right continuous flow on $E$, with transition function 
$\Ss=(S_t)_{t\geqslant 0}$ as in Theorem \ref{thm4.1}. 
Let $m$ be a positive $\sigma$-finite measure on $E$ which is
{\it subinvariant} for both $\Ss$ and  $\Tt$ and let $p\in (1, \infty)$. 

Consider  the generators $(L_p, \cd(L_p))$,  $(D_p, \cd(D_p))$, and $({L}_p^\Phi, \cd({L}_p^\Phi))$
of  $\Tt$, $\Ss$,  and respectively   ${\Tt}^\Phi$as $C_0$-semigroups on $L^p(E, m)$,
where $\Tt^\Phi=(T^\Phi_t)_{t\geqslant 0}$ is defined as 
\begin{equation*}
    T^\Phi_t:= S_t T_t \mbox{ for all } t\geqslant 0.
\end{equation*}
Let further   $\cd_o:= U_\alpha V_\beta(L^p(E, m)),$    $\alpha, \beta >0$, where $\cu$  and $\cv$ are the resolvents
 of $\Tt$ and $\Ss$ on $L^p(E, m)$.
Then  the following assertions hold.
\begin{enumerate}
    \item[(i)] $\cd_o$ is a core of $L_p$ and $D_p$, $\cd_o \subset \cd(L_p)\cap \cd(D_p) \cap \cd (L_p^\Phi)$, and 
    $$ 
    {L}_p^\Phi= L_p + D_p  \mbox{ on } \cd_o.
    $$
    \item[(ii)] Let $X^\phi_t:= \Phi_t(X_t)$, $t\geqslant 0$, $g_0\in L_+^{p'}(E,\mu)$ (where $\frac 1p + \frac{1}{p'}=1$) be such that $\int_E g_0 \d m =1$, and put $\nu=g_0\! \cdot \! m$. 
    Then $(X_t^\Phi)_{t\geqslant 0}$ solves the martingale problem for $\bigl(L^\Phi_p,\cd(L^\Phi_p)\bigr)$ under  $\Pp^\nu=\int_E \; \Pp^x\nu(\d x)$, that is, for every $u\in \cd(L^\Phi_p)$
    $$
    \left( u(X^\Phi_t)- u(X^\Phi_0)- \int_0^t L^\Phi_p u(X^\Phi_s)\d s \right)_{t\geqslant 0}
    $$
    is an $(\Fscr_t)_{t\geqslant0}$-martingale under $\Pp^\nu$.
\end{enumerate}
\end{prop}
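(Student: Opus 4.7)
The plan is to adapt the pointwise arguments of Theorem~\ref{thm4.1} to the $L^p$-setting, using the absolute continuity $\nu = g_0\!\cdot\! m$ as the bridge between $L^p$-identities and $\Pp^\nu$-almost-sure identities. Before addressing (i), I would check that $\Tt^\Phi$ is a $C_0$-semigroup of Markovian contractions on $L^p(E,m)$ with $m$ subinvariant: the inequality $m\circ T^\Phi_t = m\circ S_tT_t \leqslant m\circ T_t \leqslant m$ is immediate, and strong continuity at $t=0$ follows from that of $\Tt$ and $\Ss$ together with the $L^p$-contractivity of $S_t$.

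For the first half of (i), since $V_\beta$ is bounded on $L^p(E,m)$ and $U_\alpha$ maps $L^p(E,m)$ into $\cd(L_p)$, one has $\cd_o\subset \cd(L_p)$; moreover $(\alpha - L_p)\cd_o = V_\beta(L^p(E,m))$ is dense in $L^p(E,m)$ because $\beta V_\beta\to I$ strongly, so by the standard core criterion for $C_0$-semigroups $\cd_o$ is a core of $L_p$. The analogous assertion for $D_p$ requires first establishing $U_\alpha V_\beta = V_\beta U_\alpha$ on $L^p(E,m)$: once this is in hand, $\cd_o = V_\beta U_\alpha(L^p(E,m))\subset \cd(D_p)$, and $(\beta - D_p)\cd_o = U_\alpha(L^p(E,m))$ is dense, giving the core property for $D_p$. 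I plan to extract this $L^p$-commutation from the semigroup property of $\Tt^\Phi$ delivered by Theorem~\ref{thm4.1}(i): the identity $S_tS_sT_tT_s = T^\Phi_{t+s} = T^\Phi_tT^\Phi_s = S_tT_tS_sT_s$, combined with contractivity, forces $T_tS_s = S_sT_t$ on $L^p(E,m)$, and Laplace transforms then give the resolvent commutation. The identity $L^\Phi_p = L_p + D_p$ on $\cd_o$ follows from the decomposition
\begin{equation*}
\frac{T^\Phi_t u - u}{t} \;=\; S_t\!\left(\frac{T_t u - u}{t}\right) + \frac{S_t u - u}{t},
\end{equation*}
by adding and subtracting $S_t L_p u$ in the first summand and invoking the $L^p$-contractivity of $S_t$ together with strong continuity of $\Ss$ at zero.

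For (ii), the hypothesis ``as in Theorem~\ref{thm4.1}'' grants that $X^\Phi = (X^\Phi_t)_{t\geqslant 0}$ is a right Markov process with transition function $\Tt^\Phi$. H\"older's inequality combined with subinvariance of $m$ for $\Tt^\Phi$ yields $\Ee^\nu[|f(X^\Phi_t)|] \leqslant \|g_0\|_{p'}\|f\|_p$ for every $f\in L^p(E,m)$, so for $u\in \cd(L^\Phi_p)$ the candidate martingale $M_t:= u(X^\Phi_t) - u(X^\Phi_0) - \int_0^t L^\Phi_p u(X^\Phi_s)\,\d s$ is $\Pp^\nu$-integrable, with Fubini handling the time integral. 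The Markov property then yields
\begin{equation*}
\Ee^\nu[M_{t+h} - M_t\,|\,\Fscr_t] \;=\; T^\Phi_h u(X^\Phi_t) - u(X^\Phi_t) - \int_0^h T^\Phi_s L^\Phi_p u(X^\Phi_t)\,\d s,
\end{equation*}
and, viewed as a function of its argument, the right-hand side vanishes in $L^p(E,m)$ by the fundamental $C_0$-semigroup identity $T^\Phi_h u - u = \int_0^h T^\Phi_s L^\Phi_p u\,\d s$; since the law of $X^\Phi_t$ under $\Pp^\nu$ is absolutely continuous with respect to $m$ (its density is the image of $g_0$ under the $L^{p'}$-adjoint of $T^\Phi_t$), this $L^p$-zero identity translates to a $\Pp^\nu$-almost-sure zero, establishing the martingale property.

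The main obstacle I anticipate is the $L^p$-commutation $U_\alpha V_\beta = V_\beta U_\alpha$ needed in (i): the commutation $DL = LD$ assumed in Theorem~\ref{thm4.1} lives on pointwise domains and does not immediately upgrade to operator commutation on $L^p(E,m)$, so extracting it cleanly, either from the semigroup relation $T^\Phi_{t+s}=T^\Phi_tT^\Phi_s$ or from a density argument on a suitable common core, is the delicate step on which the embedding $\cd_o\subset \cd(D_p)$ rests. Everything else reduces to standard manipulations with $C_0$-semigroups on $L^p$.
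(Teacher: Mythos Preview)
Your overall approach matches the paper's: for (i) it simply notes $\cd(L_p)=U_\alpha(L^p(E,m))$, $\cd(D_p)=V_\beta(L^p(E,m))$, observes density, and defers to the proof of Theorem~\ref{thm4.1}~(ii), which is precisely your decomposition $\frac{T^\Phi_t u - u}{t} = S_t\bigl(\frac{T_t u - u}{t}\bigr) + \frac{S_t u - u}{t}$; for (ii) the paper just cites Proposition~1.4 of \cite{BeBoRo06a}, and your direct computation via the Markov property and the identity $T^\Phi_h u - u = \int_0^h T^\Phi_s L^\Phi_p u\,\d s$ is exactly what that external result supplies.

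There is one point to fix. Your proposed extraction of $T_tS_s = S_sT_t$ from the semigroup identity $T^\Phi_{t+s} = T^\Phi_t T^\Phi_s$ is circular: in the proof of Theorem~\ref{thm4.1}~(i) the Markov property of $X^\Phi$ with transition function $\Tt^\Phi$ is established \emph{using} the kernel commutation $T_sS_r = S_rT_s$ (this is the step $T_sS_{t+s}f(X_t)= S_sT_sS_tf(X_t)$ there), so the semigroup property of $\Tt^\Phi$ already presupposes what you want to derive from it. Moreover, even granting $S_tS_sT_tT_s = S_tT_tS_sT_s$, cancelling $S_t$ on the left requires injectivity of $S_t$ on $L^p(E,m)$, which contractivity alone does not give. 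The correct resolution is simpler: the phrase ``as in Theorem~\ref{thm4.1}'' places you in a setting where $S_t$ and $T_s$ already commute as kernels on $(E,\cb)$ (this is how the hypothesis $DL=LD$ enters that proof), and this kernel identity passes verbatim to $L^p(E,m)$, yielding $U_\alpha V_\beta = V_\beta U_\alpha$ at once. With that adjustment your argument is complete.
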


\begin{proof}
Because $\cd(L_p)=U_\alpha( L^p(E, m) )$  and $\cd(D_p)= V_\beta ( L^p(E, m) )$ we clearly have that
$D_o$ is dense in $L^p(E, m)$. 
Assertion  $(i)$ follows arguing as in the proof of  Theorem \ref{thm4.1} $(ii)$.

Assertion $(ii)$ is a consequence of Proposition 1.4 from \cite{BeBoRo06a}.
\end{proof}

Proposition \ref{prop3.5} has an $L^p$-version as well, and its proof is given in Appendix {(A.5)}.

\begin{prop} \label{prop5.2} 
Let $(P_t)_{t\geqslant 0}$ be a sub-Markovian strongly continuous semigroups of contractions on $L^p(E,\mu)$. 
Then the following assertions are equivalent.
\begin{enumerate}
    \item[(i)] The semigroup $(P_t)_{t\geqslant 0}$  is multiplicative on $L^p(E, m)$.
    \item[(ii)] If $(L,D(L))$ is the infinitesimal generator of $(P_t)_{t\geqslant 0}$, then 
    $$
    u\in D(L)\cap L^{\infty}(E,\mu)\Rightarrow u^2\in D(L) \quad \mbox{and}\quad  Lu^2=2uLu.
    $$
\end{enumerate} 

\end{prop}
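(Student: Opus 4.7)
The plan is to prove the two implications separately, in both cases exploiting the fact that sub-Markovianity gives $\|P_t u\|_\infty \leq \|u\|_\infty$ for $u \in L^\infty(E,m) \cap L^p(E,m)$, so that $P_t$ preserves $L^\infty \cap L^p$, and combining $L^p$-differentiability with $L^\infty$-bounds through the elementary estimate $\|fg\|_p \leq \|f\|_\infty \|g\|_p$.

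For the direction $(i) \Rightarrow (ii)$, I fix $u \in D(L) \cap L^\infty$. Then $u^2 \in L^p$ because $|u^2| \leq \|u\|_\infty |u|$, and multiplicativity yields $P_t(u^2) = (P_t u)^2$. The proof reduces to showing
\[
\frac{P_t(u^2) - u^2}{t} = \frac{P_t u - u}{t}(P_t u + u) \;\longrightarrow\; 2u\,Lu \quad \text{in } L^p \text{ as } t \searrow 0,
\]
which I would split as
\[
\left[\frac{P_t u - u}{t} - Lu\right](P_t u + u) + Lu\,(P_t u - u).
\]
The first summand is controlled by $2\|u\|_\infty \|(P_t u - u)/t - Lu\|_p \to 0$. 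The second summand requires more care since $P_t u - u \to 0$ only in $L^p$; I would use the subsequence principle together with dominated convergence (the integrand is dominated by $2\|u\|_\infty |Lu|^p \in L^1$) to conclude $\|Lu\,(P_t u - u)\|_p \to 0$.

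For the direction $(ii) \Rightarrow (i)$, I first apply (ii) to $P_t u$ (which lies in $D(L) \cap L^\infty$) to get the pointwise product rule $L((P_t u)^2) = 2 P_t u \cdot P_t Lu$, and to $u$ itself to get $L(u^2) = 2uLu$. Setting $v_t := (P_t u)^2 - P_t(u^2) \in D(L)$, I compute its $L^p$-derivative: the term $d/dt\,P_t(u^2) = P_t L(u^2)$ is standard, while for $d/dt\,(P_t u)^2$ I expand the difference quotient $[(P_{t+h}u)^2 - (P_t u)^2]/h = [(P_{t+h}u - P_t u)/h](P_{t+h}u + P_t u)$ and apply the same mixed $L^p/L^\infty$ argument as in the forward direction to obtain $2 P_t u \cdot P_t Lu$. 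Thus $v$ solves the abstract Cauchy problem $dv_t/dt = Lv_t$, $v_0 = 0$, and by the standard uniqueness argument (differentiate $t \mapsto P_{T-t} v_t$) I conclude $v_t \equiv 0$, so $P_t(u^2) = (P_t u)^2$ on $D(L) \cap L^\infty$. Polarization $(u+v)^2 = u^2 + 2uv + v^2$ promotes this to the bilinear identity $P_t(uv) = (P_t u)(P_t v)$. Finally, for arbitrary $f,g \in L^\infty \cap L^p$, the approximants $f_\alpha := \alpha U_\alpha f$ satisfy $f_\alpha \in D(L) \cap L^\infty$, $\|f_\alpha\|_\infty \leq \|f\|_\infty$, and $f_\alpha \to f$ in $L^p$; passing to the limit $\alpha \to \infty$ in $P_t(f_\alpha g_\alpha) = (P_t f_\alpha)(P_t g_\alpha)$ using the mixed-norm estimate one more time yields multiplicativity.

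The main obstacle throughout is exactly this mismatch: the semigroup is only strongly continuous in $L^p$, but the multiplicative identities force us to control products of converging sequences, which needs the uniform $L^\infty$-bounds supplied by sub-Markovianity. The crux is the lemma that if $f_h \to 0$ in $L^p$ with $\|f_h\|_\infty$ bounded and $g_h \to g$ in $L^p$, then $f_h g_h \to 0$ in $L^p$; this is used repeatedly, and is proved by splitting $f_h g_h = f_h(g_h - g) + f_h g$, estimating the first piece via $\|f_h\|_\infty$ and the second by a subsequence-plus-dominated-convergence argument.
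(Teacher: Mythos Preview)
Your proof is correct and follows essentially the same strategy as the paper's proof: factor the difference quotient $\frac{P_t u^2 - u^2}{t} = \frac{P_t u - u}{t}(P_t u + u)$ for $(i)\Rightarrow(ii)$, and for $(ii)\Rightarrow(i)$ show that $(P_t u)^2$ and $P_t(u^2)$ both solve the abstract Cauchy problem $\frac{\d u_t}{\hspace{-1.5mm}\d t}=Lu_t$, $u_0=u^2$, invoke uniqueness, polarize, and extend by density of $D(L)\cap L^\infty$.

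The paper's write-up is considerably terser. Where the paper simply asserts the product convergence (``since $\frac{P_tu-u}{t}$ converges in $L^p$ to $Lu$ and $P_tu$ converges to $u$, we deduce that $\frac{P_tu^2-u^2}{t}$ converges to $2uLu$''), you correctly identify and isolate the mixed-norm lemma that makes this rigorous, namely that $f_h\to 0$ in $L^p$ with $\sup_h\|f_h\|_\infty<\infty$ and $g\in L^p$ imply $f_h g\to 0$ in $L^p$ via subsequences and dominated convergence. Similarly, for the final density step the paper just says ``$D(L)\cap L^\infty$ is dense in $L^p$'', while you supply the concrete approximants $f_\alpha=\alpha U_\alpha f$, which have the crucial property $\|f_\alpha\|_\infty\leqslant\|f\|_\infty$ needed to pass to the limit in the product identity. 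These are genuine refinements of the paper's argument rather than a different route.
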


\paragraph{Example.} 
Let $E=[0,1)\cup(1,\infty)$, $\mu=$ Lebesgue measure on $E$ and for $f\in L^{p}(E,\mu)$, let $P_tf:=f(.+t)$. 
Then $(P_t)_{t\geqslant 0}$ is a sub-Markovian $C_0$- semigroup of contractions on $L^{P}(E,\mu)$ which is multiplicative. 
Let $E'=[0,\infty)$. Then clearly $(P_t)_{(t\geqslant 0)}$ coincides (on $L^p$) with the transition function of the semi-dynamical system on $E'\supset E $ given by uniform motion to the right.

\medskip
The next theorem is the main result on multiplicative $L^p$-semigroups and continuous flows, and it represents a converse of statement \eqref{5.1}.

\begin{thm}  \label{thm5.3} 
Let $p\in [1,+\infty)$ and $({\bf S}_t)_{t\geqslant 0}$ be a 
$C_0$-semigroup of Markovian  contractions on $L^p(E, \mu )$ which is multiplicative, 
where $(E,\cb)$ is a Lusin measurable space and $\mu$ is a
$\sigma$-finite measure on $(E,\cb)$. 
Then there exist a Lusin topological space $E'$ with $E\subset E'$, $E\in \cb'$ 
(the $\sigma$-algebra of all Borel subsets of $E'$), $\cb=\cb'|_{E}$,
and a continuous flow with state space  $E'$ such that its
transition function $\Ss=(S_t)_{t\geqslant 0}$,
regarded on $L^p(E',\overline{\mu}),$ coincides with $({\bf S}_t)_{t\geqslant 0}$,
where $\overline{\mu}$ is the measure on $(E',
\cb')$ extending $\mu$ by zero on $E'\setminus E$.
\end{thm}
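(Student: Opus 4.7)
The plan is to reduce the statement to three already-established building blocks: the existence of a right Markov process behind a $C_0$-semigroup of Markovian contractions on $L^p$ from \cite{BeBoRo06}, the characterization in \Cref{rem2.1}(ii) of semi-dynamical systems via pointwise-multiplicative transition functions, and the topological refinement in \Cref{thm2.2} producing a continuous flow in a convenient Lusin topology. The additional multiplicative hypothesis on $({\bf S}_t)_{t\geqslant 0}$ is used precisely to upgrade the right Markov process obtained in the first step to a deterministic flow.

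First, I would invoke \cite{BeBoRo06} (together with the refinements in \cite{BeBoRo06a} and \cite{BeCiRo18}) to embed $E$ as a Borel subset of a Lusin topological space $E'$ with $\cb=\cb'|_E$, extend $\mu$ to $\overline{\mu}$ by zero on $E'\setminus E$, and obtain a right Markov process on $E'$ whose Borel sub-Markovian transition function $\Ss=(S_t)_{t\geqslant 0}$ acts on $L^p(E',\overline{\mu})\cong L^p(E,\mu)$ exactly as $({\bf S}_t)_{t\geqslant 0}$. At this point only the $C_0$-contraction and Markovian structure has been used; the multiplicativity is still merely an $L^p$-identity between equivalence classes.

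Next, I would transfer multiplicativity from $L^p$-classes to the Borel kernels. Applied to $f=g=1_A$ with $A\in \cb'$, the $L^p$-identity $S_t(fg)=(S_tf)(S_tg)$ gives $S_t 1_A\in\{0,1\}$ $\overline{\mu}$-a.e., so $S_t(x,\cdot)$ is a $\{0,1\}$-valued probability measure, hence a Dirac mass $\delta_{\Phi_t(x)}$, for $\overline{\mu}$-a.e. $x$. Using a countable generating family in $\cb'$ together with the countable dense set $\Q_+$ of times, one obtains a single $\overline{\mu}$-null exceptional set $N$; by right-continuity of the paths and the resolvent equation, $N$ can be enlarged (still kept $\overline{\mu}$-null) to an absorbing set for the right process. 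Restricting to $E'':=E'\setminus N$, which is still a Lusin Borel space with $\overline{\mu}(E'\setminus E'')=0$, one gets $S_t(x,\cdot)=\delta_{\Phi_t(x)}$ for every $x\in E''$ and $t\in\Q_+$, and then for every $t\geqslant 0$ by the semigroup property together with right-continuity.

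Finally, with pointwise multiplicativity of $\Ss$ secured on $E''$, \Cref{rem2.1}(ii) yields a semi-dynamical system $\Phi=(\Phi_t)_{t\geqslant 0}$ on $E''$ with transition function $\Ss$; the required countable separating family $\cC_o\subset bp\cb'$ is furnished by (a countable dense subset of) a Ray cone $\calr$ as in the proof of \Cref{thm2.2}. That same theorem then produces a Lusin topology on $E''$ with the same Borel $\sigma$-algebra, making $\Phi$ a continuous flow with each $x\longmapsto \Phi_t(x)$ continuous; relabelling $E''$ as $E'$ (the discarded null set being irrelevant for the $L^p$-identification) gives the desired conclusion. The main obstacle I anticipate is precisely the promotion of the $\overline{\mu}$-a.e. multiplicativity to genuinely pointwise multiplicativity on a Borel set of full measure: one must exhibit an absorbing $\overline{\mu}$-null set outside of which the kernel identities $S_t(x,\cdot)=\delta_{\Phi_t(x)}$ hold for every $x$ and every $t\geqslant 0$ simultaneously, and check that the restriction preserves both the Markovian and the semigroup structure. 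This is where the right-process machinery underlying \cite{BeBoRo06} has to be used genuinely, beyond its black-box existence statement.
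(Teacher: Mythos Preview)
Your overall strategy coincides with the paper's: invoke \cite{BeBoRo06} to realize $({\bf S}_t)_{t\geqslant 0}$ as the $L^p$-trace of the transition function of a right process on a larger Lusin space $E'$, upgrade the $L^p$-multiplicativity to pointwise multiplicativity on an absorbing set of full $\overline{\mu}$-measure, read off a semi-dynamical system via \Cref{rem2.1}\,(ii), and then apply \Cref{thm2.2} to obtain a continuous flow. You have also correctly located the only genuine difficulty.

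The substantive divergence is in how that upgrade is executed, and here your sketch does not quite close the gap you yourself flag. The paper does \emph{not} test multiplicativity on indicator functions. Instead it fixes a countable multiplicative generating family $\ca\subset bp\cb'\cap L^p(E',\mu')$ and works with the set
\[
F_o:=\bigl\{x\in E':P'_t(V_\beta f\cdot V_\beta g)(x)=P'_t(V_\beta f)(x)\,P'_t(V_\beta g)(x)\ \text{for all }t\in\Q_+,\ f,g\in\ca\bigr\}.
\]
Working with $V_\beta$-potentials rather than indicators $1_A$ buys two things simultaneously. First, $V_\beta f$, $V_\beta g$ and their product are finely continuous, so $F_o$ is finely closed (Lemma~2.8 of \cite{BeCi16}); this is exactly the hypothesis needed for the absorbing-set construction (Lemma~2.1 of \cite{BeBoRo06}) to yield a full-measure absorbing $F\subset F_o$. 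Second, for every $x$ the maps $t\mapsto P'_t(V_\beta f)(x)$ and $t\mapsto P'_t(V_\beta f\cdot V_\beta g)(x)$ are right continuous, which is what carries the identity from $t\in\Q_+$ to all $t\geqslant 0$ on $F$; a monotone-class argument then propagates pointwise multiplicativity from $V_\beta(\ca)$ to all of $bp\cb'$. With your choice $f=g=1_A$ neither ingredient is available in general: $1_A$ need not be finely continuous, so the complement of your exceptional set $N$ need not be finely closed (and the absorbing-set lemma, as invoked, does require this), and $t\mapsto S_t 1_A(x)$ need not be right continuous for every $x$. Your phrase ``by right-continuity of the paths and the resolvent equation, $N$ can be enlarged \ldots\ to an absorbing set'' is thus precisely the point that remains to be argued, and the paper's device of testing on $V_\beta(\ca)$ is the mechanism that makes it work.

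A minor cosmetic difference: after obtaining the semi-dynamical system $\Phi^o$ on $F$, the paper extends it trivially by the identity on $E'\setminus F$ and only then applies \Cref{thm2.2}, so the final state space is still $E'$ (with a new Ray-type topology); your ``discard and relabel'' amounts to the same thing.
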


\begin{proof}
Let $({\bf V}_\alpha)_{\alpha >0}$ be the resolvent of sub-Markovian contractions on $L^p(E, \mu)$ associated with $({\bf S}_t)_{t\geqslant 0}$.
By  Theorem 2.2 from \cite{BeBoRo06} there exist a Lusin topological space $E' $ with $E\subset E'$, $E\in \cb'$ 
(the $\sigma$-algebra of all Borel subsets of $E'$), $\cb=\cb'|_{E}$,
and a right Markov process $X$ with state space  $E'$ such that its
resolvent $({V}_\al)_{\al>0}$, regarded on $L^p(E',{\mu}'),$ coincides with
$({\bf V}_\al)_{\al>0}$, where ${\mu}'$ is the measure on $(E',
\cb')$ extending $\mu$ by zero on $E'\setminus E$.

Let $(P'_t)_{t\geqslant 0}$ be the transition function of $X$ and $\ca$ be a countable subset of   $bp\cb'\cap L^p(E', \mu')$  which is multiplicative and generates the $\sigma$-algebra $\cb'$. 
Consider the set
$$
F_o = \{ x\in E' :   P'_t( V_\beta f \cdot V_\beta g) = P'_t( V_\beta f) P'_t (V_\beta g) \mbox{ for all } t\in \Q_+ \mbox{ and } f, g\in \ca \} 
$$
for some $\beta>0$. 
Clearly, $P'_t$ coincides with ${\bf S}_t$ as an operator on $L^p(E', \mu')$ for each $t\geqslant 0$,  
hence it is multiplicative on $L^p(E', \mu')$ and therefore $\mu' (E'\setminus F_o)=0$.  
We have $F_o\in \cb'$ and applying Lemma 2.8 from \cite{BeCi16} we deduce that it is finely closed.
By Lemma 2.1 and its proof from \cite{BeBoRo06} there exists a finely closed set $F\in \cb'$, $F\subset  F_o$, such that 
$\mu' (E'\setminus F)=0$ and $V_\alpha (1_{E' \setminus F})=0$  on $F$. 
Since  $V_\alpha (1_{E'\setminus F})>0$ on the finely open set $E'\setminus F$,  
if follows that $F$ is an absorbing subset of $E'$.
Therefore we may consider the restriction $(P_t)_{t\geqslant 0}$ of the transition function  $(P'_t)_{t\geqslant 0}$ from $E'$ to $F$,
$P_t f: = P'_t {f}' |_F$, where $f'\in p\cb'$ is such that $f' |_F= f$.

Because the functions $t \longmapsto    P'_t( V_\beta f \cdot V_\beta g)$ and 
$t \longmapsto   P'_t( V_\beta f) $ are right continuous on $[0, \infty)$ it follows that 
$P'_t( V_\beta f \cdot V_\beta g) = P'_t( V_\beta f) P'_t (V_\beta g)$  on $F$ for all $t\geqslant 0$  and  $f, g\in \ca$.
By a monotone class argument we get that $(P_t)_{t\geqslant 0}$ is a multiplicative transition function on $F$ and condition $(\ref{eq2.1})$ is satisfied.
Consequently, Remark \ref{rem2.1} implies that there exists a semi-dynamical system $\Phi^o =(\Phi^o_t)_{t\geqslant 0}$ on $F$  having the transition function  $(P_t)_{t\geqslant 0}$.

Let  $\Phi=(\Phi_t)_{t\geqslant 0}$ on $E'$ be the trivial extension of $\Phi^o$ from $F$ to $E'$,
$\Phi_t (x):= \Phi^o_t(x)$ if $x\in F$ and $\Phi_t (x)= x$ if $x\in E'\setminus E$ for all $t\geqslant 0$.
Since $(sd4)$ holds on $F$ for $\Phi^o$ with the countable set $\cC_o\subset bp\cb$ then 
$(sd4)$ also holds  for $\Phi$ on $E'$ considering a countable set $\cC'_o\subset bp\cb'$ which separates the points of $E'$
and $\cC'_o|_E = \cC_o$. 
So, $\Phi=(\Phi_t)_{t\geqslant 0}$  is a semi-dynamical system on $E'$ and applying Theorem \ref{thm2.2} 
we may replace the topology of $E'$ with a a conveninent Ray one, such that $\Phi$ becomes a continuous flow on $E'$ as claimed.
\end{proof}

\begin{rem} 
It is proven in \cite{BeBoRo06a} that under additional assumptions on the domain of the generator of a $C_0$-semigroup of sub-Markovian contractions 
on $L^p(E, m)$ the associated Markov process exists on $E$, so, it is not more necessary to consider a larger state space; for applications in significant examples see also
\cite{GrNo20}, \cite{GrWa19} and \cite{CoGr10}.
In this case, if the semigroup is multiplicative on $L^p(E, m)$, one can see  that the associated continuous flow from Theorem \ref{thm5.3} remains on $E$.
\end{rem}

\section*{Appendix}
\begin{proof}[(A.1) Proof of \Cref{lem3.1}]
   Observe first that if $f\in \cb_o$ then $U_\alpha|f|\leqslant U_\alpha h_\alpha< \infty $ for all $\alpha >0$, so, $U_\alpha f\in [\cb]$.

\medskip
$(i)$  Let $\alpha, \alpha' >0 $ and $\alpha_o: = \inf (\alpha, \alpha ')$. 
If $f\in\cb_o$ then there exist $t_o >0$ and $h_{\alpha_o} \in p\cb$ such that
 $\sup_{0<s< t_o} T_s | f | \leqslant h_{\alpha_o}$  with  $U_{\alpha_o}  h_{\alpha_o} < \infty$.
 Then one can see that
 $\sup_{0<s< t_o} T_s |U_\alpha f |  \leqslant U_\alpha h_{\alpha_o}$. 
 Since $\alpha_o\leqslant \alpha , \alpha'$ and $U_{\alpha_o}  h_{\alpha_o} < \infty$, it follows that
 $U_{\alpha}  h_{\alpha_o}$ and $U_{\alpha'} U_\alpha h_{\alpha_o}$ are also real-valued functions. 
 We conclude that $U_\alpha f\in \cb_o$. 
 We have also $T_t U_\alpha h_{\alpha_o}\leqslant e^{\alpha t}U_\alpha h_{\alpha_o}<\infty$, $t>0$,
 hence $U_\alpha f\in \cb_{oo}$.
 
 Let  now $f\in\cb_{oo} $,   $\alpha, t'>0$ and $t_o > 0$  be such that 
$\sup_{0<s< t_o} T_s | f | \leqslant h:=\inf( h_t, h_{t+t'},  h_\alpha)  \in p\cb$  with   
$T_t h_t  +T_{t+t'} h_{t+t'}  + U_\alpha h_\alpha<\infty$. 
Then
 $ T_t |T_s  f |  \leqslant T_t h_t <\infty$ for every $s<t_o$.
 Since $\lim_{s\searrow 0}  T_s f=f$, we deduce by dominated convergence that
 $T_t|f|<\infty$ and $ \lim_{s\searrow 0}  T_s T_t f= T_t f$.
 We have also $\sup_{0<s< t_o}  T_s|T_t  f |  \leqslant T_t h <\infty$ with $U_\alpha T_t h \leqslant e^{\alpha t} U_\alpha h <\infty$ and $T_{t'} T_t h<\infty.$
 Therefore   $T_t f\in \cb_{oo}$.

\medskip
Assertion $(ii)$ follows  because $U_\alpha h, T_t h\in \ce$, provided that $h\in \ce$.

\medskip
$(iii)$ Let  $f\in\cb^0$ be  bounded, so, we may assume that $|f|\leqslant 1$. 
Then $| f | \leqslant \widehat{1}:= \lim_{t\searrow 0} T_t 1$ which is excessive and  therefore $f$ belongs to $\cb_e$.

\medskip
$(iv)$ The first assertion follows from $(ii)$ since  $ f = \lim_{t\searrow 0} T_t f$ if $f\in[\cE]\cup b[\ce_\alpha]$.
If $U|f|<\infty$ then $Uf\in [\cE]$ and therefore $Uf \in \cb_{oo}$.
\hfill $\square$  
\end{proof}

\begin{proof}[(A.2) Proof of \Cref{prop3.3}]

\medskip
$(i)$ 
Since by assertion $(i)$ of Lemma  \ref{lem3.1} we have $U_\alpha (\cb_o)\subset \cb_{oo}$, 
it is clear that the set $U_\alpha (\cb_o)$ does not depend on $\alpha>0$.
Let $u=U_\alpha f$ with $f\in \cb_o$. 
Then $u$ also belongs to $\cb_o$, hence in particular,  $U_\alpha |u|<\infty$.
Let further $t_o>0$ and $h_\alpha \in p\cb$ be such that 
$T_s |f|  \leqslant h_\alpha$ for all $s<t_o$ and $U_\alpha h_\alpha <\infty$.
We have 
$| {T_t u-u} |\leqslant  (e^{\alpha t} -1)  |u| +  h_\alpha e^{\alpha t} \int_0^t e^{ -\alpha s} \d s$ if $t<t_o$.
Therefore
$ \sup_{0<t<t_o} |\frac{T_tu- u}{t }| \leqslant h:= \alpha |u| + h_\alpha$ and $U_\alpha h<\infty$.
We also have 
$\frac{T_t u-u}{t} = \frac{e^{\alpha t}- 1}{t} u + \frac{ 1- e^{\alpha t} }{\alpha t} f - \frac{e^{\alpha t}} {t} \int_0^t e^{-\alpha s} (T_s f - f) \d s.$
Clearly, when $t\searrow 0$, the first  term from the right hand side converges pointwise to $\alpha u$, 
the second one  to $-f$, while the third one converges to zero because
$\lim_{s \searrow 0} T_s f=f$. We conclude that $u\in \cd(L)$ and $Lu= \alpha u -f.$
Conversely, if $u\in \cd(L)$ then let $\alpha, t_o>0$,  and $h_\alpha \in p\cb$ with $U_\alpha h_\alpha <\infty$ and
$\sup_{0<t<t_o} | \frac{T_tu- u}{t } |\leqslant h_\alpha$.
Let  $v:= Lu= \lim_{t \searrow 0}\frac{T_t u-u}{t}\in \cb_o$.
Because $U_\alpha h_\alpha<\infty$, by dominated convergence we get
$\lim_{t \searrow 0}\frac{T_t U_\alpha u-U_\alpha u}{t}= U_\alpha v$. 
On the other hand, from the first part of the proof  we have $U_\alpha u\in \cd(L)$ and 
$\lim_{t \searrow 0}\frac{T_t U_\alpha u-U_\alpha u}{t}= L(U_\alpha u)= \alpha U_\alpha u- u$. 
We conclude  that $u=U_\alpha (\alpha u- v)\in U_\alpha(\cb_o)$.

To prove   the last assertion of  $(i)$  we argue as in the proof of Proposition 1.5 (a) from \cite{EthKu86}. 
We have $\frac{T_h u -u}{h} = \frac 1 h \int_0^t [ T_{s+h} f - T_s f] \d s=
\frac 1 h \int_t^{t+h} T_s f \d s - \frac 1 h \int_0 ^h Ts f \d s$.
Because the function $s\longmapsto T_s f(x)$ is right continuous on $[0, \infty)$ for every $x\in E$, 
it follows  that
$\lim_{h\searrow 0} \frac{T_h u -u}{h}= T_t f -f$ pointwise on $E$.
Since we also have  $| \frac{T_h u -u}{h} |\leqslant 2 \| f\|_\infty$ for all $h>0$, we conclude that $u$ belongs to $\cd(L)$ and $Lu=T_t f -f$.

\medskip
$(ii)$ 
Let $x\in E$. Since $g\in \cb^0$ we have  $\lim_{t\searrow 0}  T_tg(x)= g(x)$ and therefore 
$\lim_{t\searrow 0} \frac{T_tu (x)- u(x)}{t} = \lim_{t\searrow 0}  \frac{1}{t} \!\!\int_0^t  T_s g(x)\! \d s$ $=g(x)=\overline{L} u (x).$ 

\medskip
$(iii)$ We clearly have $\cd(L_w)\subset \cd_e(L)$ because  $b\cb^0 \subset \cb_e$.
Let $u=U_\alpha f\in \cd(L)$.
Then, by assertion $(i)$  of Lemma \ref{lem3.1} we get $u \in \cb_{oo}$ and we have
$\int_0^t T_s(\alpha u -f)= \alpha \int_0^t  \int_0^\infty e^{-\alpha r} T_{r+s} f \d r\d s - \int_0^t T_s f \d s=
\int_0^\infty (e^{\alpha r\wedge t}-1)  e^{-\alpha r} T_t f  \d r-  \int_0^t T_s f \d s= -u +e^t \int_t^\infty  e^{-\alpha r} T_r f \d r= -u+  T_t U_\alpha f =  -u + T_t u$, where for the second equality we used Fubini's Theorem. 
We conclude that $u\in \cd(\overline L)$ and  by assertion $(ii)$ we  clearly  have
$\overline L u= Lu$.
Let now $u\in \cd(\overline{L})\cap \cb_o$ such that $\overline{L} u\in \cb_o$, let $\alpha >0$ and $h_\alpha\in p\cb$ with $U_\alpha h_\alpha < \infty$,
be such that $T_s (|Lu|)\leqslant h_\alpha$ for all $s <  t_o$ for some $t_o >0$.
Then $| \frac{T_tu - u }{t} |\leqslant \frac 1 t \int _0^t T_s(|Lu|) \! \d s 
\leqslant  h_\alpha$ for all $t<t_o$.
It follows   that $u\in \cd (L)$.

\medskip
$(iv)$ Let $u\in \cd_o(L)$ and $\alpha>0$.
Then $u=U_\alpha (\alpha u- Lu)$ with $u, Lu \in \cb_{oo}$, so, $u\in U_\alpha (\cb_{oo})$.
Conversely, if $u=U_\alpha f$ with $f\in \cb_{oo}$, then by assertion $(i)$ we have $u\in \cd(L)$
and $Lu= \alpha u-f\in \cb_{oo}$, hence $u\in \cd_o(L)$.

Let $u=U_\alpha f \in \cd_o(L)$, $ f\in \cb_{oo}.$ 
According to  Lemma \ref{lem3.1} $(i)$ we get $T_t f\in \cb_{oo}$.
Therefore
 $T_t u= U_\alpha T_t f$ also belongs to $\cd_o(L)$  
and we have 
$LT_t u= LU_\alpha T_t f= \alpha U_\alpha T_t f - T_t f= T_t Lu$.

\medskip
The proof of  $(v)$ is straightforward. 

\medskip
Assertion $(vi)$ follows arguing as in the proof of $(iv)$ and using Lemma \ref{lem3.1} $(ii)$.

\medskip
$(vii)$ The first inclusion follows from assertion $(iv)$.
Let now  $u \in U_\alpha(\cd(L)$, $u=U_\alpha U_\beta f$ with $f\in \cb_o$.
Then $LT_t u= \alpha T_t u - T_t U_\beta f$ and it is continuous in $t$,  
according with the following remark:
{\it If $g\in[\cb]$ is such that $U_\alpha |g|<\infty$ then the real-valued function $t\longmapsto T_t U_\alpha g (x)$ is continuous on 
$[0, \infty)$ for each $x\in E$ because $T_t U_\alpha g = e^{\alpha t} \int_t^\infty e^{-\alpha s} T_s g \d s.$ }

To prove the last inclusion of assertion $(vii)$, observe that by Lemma \ref{lem3.1} $(iv)$ 
we have $U_\alpha(b[\cb])\subset \cb_{oo}$ and by assertion $(iv)$
 we obtain $U_\beta U_\alpha ({b[\cb]})\subset  \cd_o(L)$.
The continuity property is obtained using again the above remark.

\medskip
$(viii)$  Let $u\in \cd^c_o(L)$, $u=U_\alpha f$ with $f\in \cb_{oo}$.
Then  by Lemma \ref{lem3.1} $(i)$ we have  $T_t f\in \cb_{oo}$ for each $t\geqslant 0$ and $L T_t u= \alpha T_t u - T_t f$.
Because $t\longmapsto T_t u(x)$ is continuous, it follows that $T_t f(x)$ is also continuous in $t$ on $[0, \infty)$ for each $x\in E$.
We have $T_t u= e^{\alpha t} (u-\int_0^t e^{\alpha s} T_s f \d s)$ and from the above considerations the first statement of assertion $(viii)$ follows.
In particular, we proved that $u_t:=T_t u$, $t\geqslant 0$, is a solution to the equation $(\ref{eq3.3})$, satisfying the requested conditions: 
$T_0=u$, $\|T_t u\|_\infty\leqslant \| u\|_\infty,$
$T_t u \in \cd_o(L)$ by the above assertion $(iv)$, $LT_t  \in \cb_{oo}$, 
and  $LT_t u(x)$  is continuous in $t$ because we assumed that $u$ belongs to $\cd^c_o(L)$.

We show  now the uniqueness property for the solution to the equation $(\ref{eq3.3})$ and as announced, 
we use a classical argument, e.g., as in the proof of Theorem 1.3 from \cite{Dy65}, Ch. I, section 3, page 28.
Let $u_t$, $t\geqslant 0$, be a solution of  $(\ref{eq3.3})$ such that $u_0=0$, 
$u_t \in \cd_o(L)$, $\| u_t \|_\infty$ is bounded, $Lu_t\in \cb_{oo}$, and $Lu_t(x)$  is continuous in $t$ for each $x\in E$.
We have to show that $u_t=0$ for each $t>0$.
Let $\alpha >0$ and $v_t:= e^{-\alpha t}u_t$. 
Then  $\frac{\d v_t}{\hspace{-1.5mm}\d t}= (L-\alpha)v_t$ with $v_t\in \cd_o(L)$. 
It follows that $U_\alpha (\frac{\d v_t}{\hspace{-1.5mm}\d t})= -v_t$  for each $t>0$ and therefore
$\int_0^t v_s \d s= - U_\alpha (\int_0^t \frac{\d v_s}{\hspace{-1.5mm}\d s} \d s )= -U_\alpha v_t$.
Consequently, $\int_0^t e^{-\alpha s} u_s (x) \d s=  - e^{-\alpha t} U_\alpha u_t(x)$. 
Since $\| u_t \|_\infty$ is bounded, letting $t\to \infty$, it follows that the right hand side of the above equality tends to zero.
We conclude that $\int_0^\infty e^{-\alpha s} u_s (x) \d s=  0$ for every $\alpha >0$ and $x\in E$ and therefore $u_s(x)=0$ for each $s>0$ and $x\in E$.
\end{proof}

\begin{proof}[(A.3) Proof of Cref{prop3.6}]

Let $g=\overline D f$ with $f\in \cd(\overline D)$  and $\int_0^t  S_s (| f\overline D f|)  \d s < \infty$ for all $t>0$.
We have to prove that $S_tf^2= f^2 + 2\int_0^t S_s f S_s g \d s$ for all $t>0$, provided that
$S_t f= f+ \int_0^t S_s g \d s$.
Indeed, we have 
$\int_0^t S_s f S_s g \d s =2\int_0^t [ f+ \int_0^s S_u g \d u] S_s g \d s=
f\int_0^t S_s g \d s + \int_0^t \d s S_s g \int_0^s S_u g \d u=
f\int_0^t S_s g \d s + \int_0^t \d u S_u g [\int_0^t S_s g \d s - \int_0^u S_s g\d s] =
f\int_0^t S_s g \d s + \int_0^t \d u S_u g [\int_0^t S_s g \d s + f -S_u f ] =
2f\int_0^t S_sg \d s - \int_0^t S_u f S_u g \d u = S_t f^2 + f^2 - 2 f S_t f$.
We conclude that 
$ 2 \int_0^t S_s f S_s g \d s =2f (S_t f - f) + S_t f^2 + f^2 -2f S_t f= S_t f^2 - f^2.$    
\end{proof}

\begin{proof}[(A.4) Proof of \Cref{prop3.8}]

\medskip
The proof of $(i)$ is a straightforward verification.

\medskip
$(ii)$ Let $ u \in \cd({D^o})$ and $x\in \co^c$. 
Then by $(i)$ we have $\Phi^o_t (x)=x$ and therefore ${D^o} u(x)=0$.
Let further $\Ss=(S_t)_{t\geqslant 0} $ (resp. $\Ss^o= (S^o_t)_{t\geqslant 0} $) be the transition function of $\Phi$  (resp.  of $\Phi^o$). 
If $u\in p\cb(E)$ the $S^0_t u(x) = S_t u(x)$ provided that $t<T(x)$ and
$S^0_t u(x)= u(\Phi_{T(x)} (x)$ if $t>T(x)$  and $T(x)<\infty$.
If  $u\in \cd(D)$ and 
$x\in \co$ then there exists $\varepsilon >0$ such that 
$\Phi_t (x)\in \co$ for all $t \leqslant \varepsilon$, hence $T(x)\geqslant \varepsilon$  and therefore 
$S^o_t u(x)= S_t u (x)$ for all $t\leqslant \varepsilon$. 
We conclude that 
$Du=D^o u$ on $\co$.

\medskip
$(iii)$ Let $x\in \oco$. 
If $x\in \partial \co$ then by $(i)$ we have $\Phi^o_t(x)=x\in \oco$ for all $t\geqslant 0$.
If $x\in \co$ then clearly $\Phi^o(x)=\Phi_t(x) \in \co$ for all $ t<T(x)$. 
If $t\geqslant T(x)$ then $\Phi^o_t(x)= \Phi_{T(x)} (x)\in \partial \co$ by property  $(2)$ of $T$.

\medskip
Assertion $(iv)$ follows from  $(iii)$.    
\end{proof}

\begin{proof}[(A.5) Proof of \Cref{prop5.2}]
Let $u\in  D(L)\cap L^{\infty}(E,\mu)$. 
We have 
$\frac{P_t
u^2-u^2}{t}=\frac{P_tu-u}{t}(P_tu+u)$ and since
$\frac{P_tu-u}{t}$ (resp.  $P_t u$) is converging in $L^p(E,\mu)$
to $Lu$ (resp. to $u$)  as $t\rightarrow 0$, we
deduce that $\frac{P_t u^2-u^2}{t}$ is converging to
$2uLu$, hence $u^2\in D(L)$ and $Lu^2=2uLu$.
Conversely, let $u\in D(L)\cap L^{\infty}(E,\mu)$ and put $u_t:=(P_t u)^2 \in  \cd(L)$.
Since $\frac {\d u_t}{\!\!\d t}=2 P_t u \cdot LP_t u 
=Lu_t$ and $u_0= u^2$, we get that $u_t=P_t u^2$, hence $(P_t
u)^2=P_t u^2$. 
It follows that $P_t(uv)=P_t u \cdot P_t v$ for all $u,v\in D(L)\cap L^{\infty}(E,\mu)$ and
because  $D(L)\cap L^{\infty}(E,\mu)$ is
dense in $L^p(E,\mu)$ we conclude that the semigroup $(P_t)_{t\geqslant 0}$  is multiplicative on $L^p(E, \mu)$.    
\end{proof}

\paragraph{Acknowledgements.} 
This work was supported by grants of the Ministry of Research, Innovation and Digitization, CNCS - UEFISCDI,
project number PN-III-P4-PCE-2021-0921, within PNCDI III for the first author, and
project number PN-III-P1-1.1-PD-2019-0780, within PNCDI~III for the third author.


\end{document}